\crefname{equation}{}{}
\crefname{subsection}{Subsection}{Subsections}
\crefname{subsubsection}{Subsubsection}{Subsubsections}
\newcommand{\R}{\mathbb R}
\DeclareMathOperator{\diam}{diam}
\newcommand{\rexp}{p_{\textnormal{res}}}
\newcommand{\dimH}{\dim_{H}}
\newtheorem{lemma}{Lemma}
\newtheorem{proposition}[lemma]{Proposition}
\newtheorem{theorem}[lemma]{Theorem}
\newtheorem{corollary}[lemma]{Corollary}
\theoremstyle{remark}
\newtheorem{remark}[lemma]{Remark}
\title{Large sets without Fourier restriction theorems}
\author{Constantin Bilz}
\address{Constantin Bilz,
School of Mathematics,
University of Birmingham,
Edgbaston,
Birmingham,
B15 2TT,
England}
\email{\href{mailto:C.Bilz@pgr.bham.ac.uk}{C.Bilz@pgr.bham.ac.uk}}
\date{\today}
\subjclass[2010]{Primary 42B10. Secondary 28A80}
\keywords{Fourier restriction, Lebesgue point, Hausdorff dimension, Cantor set}
\begin{document}

\begin{abstract}
We construct a function that lies in $L^p(\R^d)$ for every $p \in (1,\infty]$ and whose Fourier transform has no Lebesgue points in a Cantor set of full Hausdorff dimension.
We apply Kova\v{c}'s maximal restriction principle to show that the same full\-/dimensional set is avoided by any Borel measure satisfying a nontrivial Fourier restriction theorem.
As a consequence of a near\-/optimal fractal restriction theorem of {\L}aba and Wang, we hence prove a lack of valid relations between the Hausdorff dimension of a set and the range of possible Fourier restriction exponents for measures supported in the set.
\end{abstract}

\maketitle

%\setcounter{tocdepth}{1}
%\tableofcontents

\section{Introduction}

It is a fundamental fact that the Fourier transform of an $L^1(\R^d)$ function is uniformly continuous.
We complement this with the following main result.

\begin{theorem}
\label{t}
There exists a function in $\bigcap_{p \in (1,\infty]} L^p(\R^d)$ whose Fourier transform has no Lebesgue points in some compact set of Hausdorff dimension $d$.
\end{theorem}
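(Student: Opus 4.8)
The plan is to construct $\widehat f$ by hand as an infinite sum of nonnegative smooth bumps, one for each cube of a suitable Cantor set $C\subseteq[0,1]^d$, arranged so that every point of $C$ is contained in arbitrarily many of these bumps; the averages of $\widehat f$ about such a point then tend to infinity, which kills the Lebesgue point (indeed far more strongly than needed). The entire difficulty lies in arranging this so that $f$ still belongs to every $L^p$, $p>1$. Concretely, I would fix a decreasing sequence $C_1\supseteq C_2\supseteq\cdots$ of compact subsets of $[0,1]^d$, where $C_k$ is a union of $b_k$ axis-parallel cubes of side $\ell_k=L_k^{-1}$ and the children of each stage-$(k-1)$ cube form a regular $c_k^{1/d}\times\cdots\times c_k^{1/d}$ grid inside its central half (so $b_k=c_1\cdots c_k$, $L_k=M_1\cdots M_k$, $M_k\ge2c_k^{1/d}$, all quantities powers of $2$). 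Choosing $c_k\to\infty$ suitably, the Moran set $C=\bigcap_k C_k$ is compact; the standard mass-distribution estimate gives $\dimH C=\liminf_k\log b_k/\log L_k=d$; and one can still arrange that $\lambda_k:=|C_k|=b_k\ell_k^{\,d}$ decays geometrically, hence so fast that $\sum_k\lambda_k^{1/p'}<\infty$ for every $p'<\infty$. Placing the children on a lattice, rather than arbitrarily, is the crucial structural choice, for reasons that surface only in the last step.

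Next I would pick $\phi_0\in C_c^\infty(\R)$ with $0\le\phi_0\le1$, $\phi_0\equiv1$ on $[-\tfrac13,\tfrac13]$ and $\operatorname{supp}\phi_0\subseteq(-\tfrac12,\tfrac12)$, set $\phi(\xi)=\prod_{l=1}^d\phi_0(\xi_l)$, and for a stage-$k$ cube $Q$ with centre $y_Q$ put $\phi_Q(\xi)=\phi((\xi-y_Q)/\ell_k)$. Then $\psi_k:=\sum_{Q\subseteq C_k}\phi_Q$, $F:=\sum_{k\ge1}\psi_k$, and $f:=F^{\vee}$. Because the bumps at a fixed stage have disjoint supports, $\|\psi_k\|_{L^q}=\lambda_k^{1/q}\|\phi\|_{L^q}$, and a point outside $C$ leaves $C_k$ at some finite stage, where $F$ is bounded by that stage; hence $F\in\bigcap_{1\le q<\infty}L^q(\R^d)$, so in particular $\widehat f=F$ is a genuine function (Lebesgue points are meaningful) and $f\in L^2$. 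For the Lebesgue points: if $x\in C$, the central-half nesting forces $x$ to lie well inside its stage-$j$ cube $Q^{(j)}(x)$ for every $j$, so for a small fixed $\varepsilon>0$ one has $\phi_{Q^{(j)}(x)}\equiv1$ on $B(x,\varepsilon\ell_k)$ for all $j\le k$; since every $\phi_Q\ge0$, this gives $\widehat f\ge k$ on $B(x,\varepsilon\ell_k)$, so the average of $|\widehat f|$ over $B(x,\varepsilon\ell_k)$ is at least $k$, while these radii tend to $0$ as $k\to\infty$. Thus no $x\in C$ is a Lebesgue point of $\widehat f$, and $C$ is a compact set of Hausdorff dimension $d$ disjoint from the Lebesgue set of $\widehat f$.

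It remains to check that $f=F^{\vee}\in L^p(\R^d)$ for every $p\in(1,\infty]$, and this is the technical heart and the main obstacle. Writing $f=\sum_k\psi_k^{\vee}$, it suffices that $\sum_k\|\psi_k^{\vee}\|_{L^p}<\infty$. Since the construction is self-similar with lattice placement, the stage-$k$ centres form a Minkowski sum of grids, and a direct computation gives
\[
\psi_k^{\vee}(\zeta)=\ell_k^{\,d}\,\phi^{\vee}(\ell_k\zeta)\prod_{j=1}^{k}\prod_{l=1}^{d}D_{c_j^{1/d}}(s_j\zeta_l),\qquad D_N(t)=\sum_{m=0}^{N-1}e^{2\pi imt},
\]
where $s_j$ is the spacing of the $j$-th grid: a product of the (tensored, rescaled) Schwartz function $\phi^{\vee}$ with rescaled Dirichlet kernels. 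The tensor structure of $\phi$ makes $\|\psi_k^{\vee}\|_{L^p}^p$ a $d$-th power of a one-dimensional integral, and the target bound is
\[
\int_{\R}|\phi_0^{\vee}(\ell_kt)|^p\prod_{j=1}^{k}|D_{c_j^{1/d}}(s_jt)|^p\,dt\;\lesssim_p\;L_k\,b_k^{(p-1)/d},
\]
i.e.\ the integral should not exceed the product of the individual $L^p$-averages of the Dirichlet factors; this would yield $\|\psi_k^{\vee}\|_{L^p}\lesssim_p\lambda_k^{1/p'}$ for $1<p<\infty$ (and $\|\psi_k^{\vee}\|_{L^\infty}\lesssim\lambda_k$ directly, from $|D_N|\le N$), whence $\|f\|_{L^p}\lesssim_p\sum_k\lambda_k^{1/p'}<\infty$. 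For $p\ge2$ the desired bound is immediate from Plancherel and Hausdorff--Young ($\|\psi_k^{\vee}\|_{L^p}\le\|\psi_k\|_{L^{p'}}=\lambda_k^{1/p'}\|\phi\|_{L^{p'}}$); the substance is $1<p<2$, where one genuinely needs cancellation --- an arbitrary union of $b_k$ disjoint cubes, e.g.\ with random signs, would cost a factor $b_k^{1/2}$, which is fatal as $p\to1$. Proving the displayed estimate is where the bulk of the work will go: using $\|D_N\|_{L^p(\mathbb T)}\asymp N^{1/p'}$ and the decay $|\phi_0^{\vee}(\ell_kt)|\lesssim_M(1+\ell_k|t|)^{-M}$ (which confines $t$ to $|t|\lesssim L_k$, a range containing a full period of every Dirichlet factor), one must account for every partial alignment of the main lobes of the kernels $D_{c_j^{1/d}}(s_j\cdot)$ at their various scales and verify that no such alignment costs a power of $b_k$ --- only factors sub-polynomial in $b_k$ are affordable against the decay of $\lambda_k$. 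It is here that the lattice placement of the Cantor cubes and the room condition $M_k\ge2c_k^{1/d}$ are indispensable.
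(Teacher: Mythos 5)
Your route is genuinely different from the paper's on the constructive side but converges to essentially the same analytic core, which you then leave unproved. The paper makes $\widehat f$ a bounded $\{-1,0,1\}$\=/valued function whose averages over nested white intervals \emph{oscillate} (\cref{l:leb}, \cref{l:nocauchy}); you make $\widehat f$ unbounded and force the averages to \emph{blow up} at every point of $C$. This is a legitimate alternative and a shorter Lebesgue\-/point argument. On the Fourier side the two approaches meet: the paper's $\check g^{(1)}$ involves $\prod_{j<k}|\cos((1-\theta_j)\Theta_j\pi\xi)|$ with near\-/dyadic phases, and your Dirichlet kernels $D_{c_j^{1/d}}(s_j t)$, with $c_j^{1/d}$ and $s_j$ powers of two, factor via $D_{2^a}(t)=\prod_{i<a}(1+e^{2\pi i 2^i t})$ into $2^a\prod_{i<a}|\cos(\pi 2^i t)|$, i.e.\ into products of cosines with \emph{exactly} dyadic phases. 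In that respect your version is cleaner: you would not even need the perturbation step (\cref{l:pert}), and smooth bumps give arbitrary polynomial decay where the paper makes do with $\operatorname{sinc}$ decay.

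But the displayed integral bound is the theorem, and you do not prove it. Two things are missing. First, the paper's \cref{l:eulergap} is exactly the estimate your target reduces to after factoring the Dirichlet kernels: $\int_0^{2^{n-1}\pi}\prod_{j\in J}|\cos(2^{-j}\xi)|^p\,d\xi\le 2^{n-|J|-1}\pi\,C_p^{b(J)}$, where $b(J)$ counts the maximal blocks of consecutive integers in $J$, proved by an induction on $b(J)$ that peels off one block at a time using periodicity of the remaining factors and the truncated Euler product \cref{e:eulertrunc}. Your phrase ``account for every partial alignment of the main lobes'' names the phenomenon but offers no mechanism comparable to that induction, and this is where the entire difficulty of the proof lies. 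Second, your bookkeeping does not confront the constant $C_p^{b(J)}$. Because your children occupy only the central half of each parent, at least one dyadic exponent is skipped at every generation, so $b(J)\approx k$; the resulting factor $C_p^{kd/p}$ must be absorbed by $\lambda_k^{1/p'}$. Your plan to keep $\lambda_k$ merely geometric (and your remark that ``only factors sub\-/polynomial in $b_k$ are affordable'') is not enough: with $\lambda_k\approx 2^{-kd}$ the sum $\sum_k\lambda_k^{1/p'}C_p^{kd/p}$ converges only if $C_p<2^{p-1}$, which fails as $p\to1^+$. One must let $M_j/c_j^{1/d}\to\infty$ slowly, forcing super\-/geometric decay of $\lambda_k$ while preserving $\dim_H C=d$; this is the analogue of the paper's condition $\chi^+(k)=o(w^+(k))$ in \cref{e:w}, which keeps the ``gap generations'' sparse relative to the deleted mass. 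The paper achieves this balance by making gap generations rare; you make every generation a gap and must compensate with faster deletion. Either way, the block\-/counting induction and the $C_p^{b(J)}$ bookkeeping are the substance of the proof, and they are absent from the proposal.
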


We say that a point $x \in \R^d$ is a \emph{Lebesgue point} of a function $g \colon \R^d \to \mathbb C$ if there exists a number $c \in \mathbb C$ such that $r^{-d} \int_{\{|y|<r\}} |g(x-y)-c| \, dy \to 0$ as $r \to 0$.
The set of non\-/Lebesgue points of a function is called its \emph{non\-/Lebesgue set}.
The Lebesgue differentiation theorem states that non\-/Lebesgue sets of locally integrable functions have Lebesgue measure zero.
\cref{t} shows that this cannot be sharpened in terms of Hausdorff dimension for the class of Fourier transforms of $L^p(\R^d)$ functions when $p>1$.

Our interest in this problem stems from a measure\-/theoretic perspective on Fourier restriction theory which was recently introduced by Müller, Ricci and Wright~\cite{MRW19}.
They asked for a pointwise interpretation of restrictions of Fourier transforms.
Due to subsequent work of Kova\v{c}~\cite{Kov19} it is known that, under fairly general assumptions, sets of divergence of local averages of Fourier transforms are avoided by measures permitting Fourier restriction theorems.
Since many such measures are known, this is a strong structural condition on non\-/Lebesgue sets of Fourier transforms.
\cref{t} shows that these sets can nevertheless be large in a metric sense.

This observation has implications for restriction theory.
Using \cref{t} and Kova\v{c}'s result we will prove the following result which asserts the existence of a set of full Hausdorff dimension and without nontrivial restriction theorems.
Let $\mathcal S(\R^d)$ denote the Schwartz space.
\begin{corollary}
\label{c:endpt}
There exists a compact subset $E$ of $\R^d$ such that $E$ has Hausdorff dimension~$d$ and for any Borel measure $\mu$ on $\R^d$ with $\mu(E)>0$ and for any $p \in (1,2]$ and any $q \in [1,\infty]$ it holds that
\[
\sup_{f \in \mathcal S(\R^d)}
\frac{\|\hat f\|_{L^q(\mu)}}{\|f\|_{L^p(\R^d)}}
=
\infty.
\]
\end{corollary}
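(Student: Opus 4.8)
The plan is to deduce \cref{c:endpt} from \cref{t} and Kova\v{c}'s maximal restriction principle~\cite{Kov19} by a short argument by contradiction. I first fix the data furnished by \cref{t}: a function $f_0 \in \bigcap_{p \in (1,\infty]} L^p(\R^d)$ and a compact set $E \subseteq \R^d$ with $\dimH E = d$ such that every point of $E$ lies in the non\-/Lebesgue set of $\hat{f_0}$. For $p \in (1,2]$, Hausdorff--Young places $\hat{f_0}$ in $L^{p'}(\R^d)$, so $\hat{f_0}$ is locally integrable and its non\-/Lebesgue set is a genuine subset of $\R^d$; this subset is moreover independent of the choice of representative, since the averages $r^{-d}\int_{\{|y|<r\}}|\hat{f_0}(x-y)-c|\,dy$ are unchanged when $\hat{f_0}$ is altered on a Lebesgue\-/null set. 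I claim that this $E$ has the property asserted in the corollary.

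Suppose not: then there are a Borel measure $\mu$ with $\mu(E)>0$, an exponent $p \in (1,2]$, an exponent $q \in [1,\infty]$, and a finite constant $C$ such that $\|\hat f\|_{L^q(\mu)} \le C\|f\|_{L^p(\R^d)}$ for every $f \in \mathcal S(\R^d)$. Assume first that $q < \infty$. Testing this inequality with an $f$ whose Fourier transform equals $1$ on a ball that contains $E$ forces $\mu(E) < \infty$, so $\nu \coloneqq \mu|_E$ is a finite Borel measure supported in $E$ for which the same inequality holds. Now I invoke Kova\v{c}'s maximal restriction principle: under such a restriction estimate the associated maximal local averaging operator is again bounded from $L^p(\R^d)$ into $L^q(\nu)$, and a density argument then shows that for \emph{every} $g \in L^p(\R^d)$ the non\-/Lebesgue set of $\hat g$ is $\nu$-null. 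Applying this with $g = f_0$ (which belongs to $L^p(\R^d)$ since $p \in (1,2]$) and recalling that the non\-/Lebesgue set of $\hat{f_0}$ contains $E$ while $\nu$ is supported in $E$, I obtain $\mu(E) = \nu(E) = 0$, contradicting $\mu(E)>0$.

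It remains to dispose of the case $q = \infty$, which is elementary and uses neither \cref{t} nor~\cite{Kov19}. Since $\mu(E)>0$ and $E$ is separable, there is a point $x_0 \in E$ with $\mu(B(x_0,\varepsilon))>0$ for every $\varepsilon>0$. Fix $\phi \in \mathcal S(\R^d)$ with $\phi \equiv 1$ on $B(0,1/2)$ and $\operatorname{supp}\phi \subseteq B(0,1)$, and for $\varepsilon \in (0,1]$ let $f_\varepsilon$ be the Schwartz function with $\hat{f_\varepsilon}(\xi) = \phi((\xi - x_0)/\varepsilon)$. Then $\|\hat{f_\varepsilon}\|_{L^\infty(\mu)} \ge 1$, because $|\hat{f_\varepsilon}| = 1$ on the set $B(x_0,\varepsilon/2)$, which has positive $\mu$-measure, whereas a direct computation gives $\|f_\varepsilon\|_{L^p(\R^d)} \lesssim \varepsilon^{d/p'} \to 0$ as $\varepsilon \to 0$; hence the supremum in the corollary is infinite in this case too.

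The only genuinely nontrivial step is the application of Kova\v{c}'s principle, and the point that needs care is that its hypotheses are met here: that $p \le 2$, so that $\hat f$ is an honest function; that $\nu$ is a finite Borel measure; and, above all, that the conclusion of the principle is precisely the statement that the non\-/Lebesgue set of the Hausdorff--Young Fourier transform of an arbitrary function in $L^p(\R^d)$ is null with respect to any measure admitting such a restriction estimate, so that it may legitimately be applied to the function $f_0$ produced by \cref{t}. The passage from a general Borel measure $\mu$ to the finite measure $\nu$, and the case $q = \infty$, are routine.
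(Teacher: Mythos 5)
Your overall strategy matches the paper's: take the function and set furnished by \cref{t}, invoke Kova\v{c}'s principle, and conclude that any measure charging $E$ cannot obey a nontrivial restriction estimate. Your handling of the elementary $q=\infty$ case by scaling and your reduction to the finite measure $\nu=\mu|_E$ (rather than the paper's appeal to $\sigma$\-/finiteness) are both fine. However, there is a genuine gap in the step you flag as ``the only genuinely nontrivial'' one: \cref{t:kovac} requires $q \in (p,\infty)$, and you never verify $q>p$. If the hypothetical restriction estimate holds with $q \le p$ (say $p=2$, $q=1$), Kova\v{c}'s theorem as stated cannot be invoked at all, since its proof goes through the Christ--Kiselev lemma and needs $q>p$. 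The paper closes this gap by interpolating the given $L^p\to L^q$ bound with the trivial $L^1\to L^\infty(\mu)$ bound: because $q<\infty$, Riesz--Thorin at a small parameter yields an $L^{p_1}\to L^{q_1}(\nu)$ estimate with $1<p_1<q_1<\infty$, and Kova\v{c}'s theorem is then applied with these exponents. This interpolation step is not optional, and your proof is incomplete without it.

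A secondary, more cosmetic issue: you paraphrase Kova\v{c}'s conclusion as giving Lebesgue points $\nu$\=/a.e.\ for every $g \in L^p(\R^d)$. That is not what \cref{t:kovac} says; it gives the conclusion for $g \in L^{2p/(p+1)}(\R^d)$, and the paper explicitly notes that upgrading to $L^p(\R^d)$ is open. This misstatement happens not to damage the argument as applied to $f_0$, since $f_0$ lies in every $L^r(\R^d)$ for $r>1$ and $2p_1/(p_1+1)>1$ when $p_1>1$, but you should state the hypothesis of the theorem you invoke correctly and then observe that $f_0$ satisfies it, as the paper does.
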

We will further strengthen this corollary by showing a lack of valid relations between the Hausdorff dimension of a set and the supremum of the range of exponents $p$ for which there are $L^p(\R^d)$\=/based restriction theorems on that set.
More precisely, the only relation that holds between these numbers is a well\-/known energy\-/theoretic inequality, see \cref{c:full} below.
For the proof of this, we will rely on a recent fractal restriction theorem of {\L}aba and Wang~\cite{LW18} which is near\-/optimal with respect to that inequality.

\subsection{Restriction theorems and convergence of averages}

One classical way of proving restriction theorems is the Tomas--Stein argument~\cite{Tom75}.
In its general endpoint form due to Mockenhaupt~\cite{Moc00}, Mitsis~\cite{Mit02} and Bak and Seeger~\cite{BS11} it implies the following:
if $\mu$ is a finite Borel measure on $\R^d$ satisfying the pointwise Fourier decay condition
\begin{equation}
\label{e:fdec}
\sup_{\xi \in \R^d} |\xi|^{\beta/2} |\hat \mu(\xi)| < \infty
\end{equation}
for some $\beta \in [0,d)$, then the restriction inequality
\begin{equation}
\label{e:res}
\|\hat f\|_{L^q(\mu)}
\leq
C
\|f\|_{L^p(\R^d)}
\end{equation}
holds for any $p \in [1, 4d/(4d-\beta)]$ and $q=2$ and for any function $f \in \mathcal S(\R^d)$.
The constant $C$ above is independent of $f$.
The full Mockenhaupt--Mitsis--Bak--Seeger theorem contains an additional dimensionality condition that leads to a larger range of restriction exponents $p$ in many situations.
The exponent $4d/(4d-\beta)$ that we give above corresponds to the dimensionality that is implied by \cref{e:fdec}, see e.g.\ \cite[Corollary~3.1]{Mit02}.
Regarding the sharpness of the Mockenhaupt--Mitsis--Bak--Seeger theorem see Hambrook and {\L}aba \cite{HL16} and previous works cited therein.

The supremum of the range of rates $\beta \in [0,d)$ for which \cref{e:fdec} holds is commonly called the \emph{Fourier dimension} of the finite nonzero Borel measure $\mu$.
The \emph{Fourier dimension} of a subset of $\R^d$ is the supremum of the set of Fourier dimensions of all finite nonzero Borel measures that are compactly supported in that set.

Hence, the Mockenhaupt--Mitsis--Bak--Seeger theorem yields a nontrivial restriction theorem for any Borel measure or set of strictly positive Fourier dimension.
This applies to classical examples such as submanifolds that are curved in an appropriate sense, but it also applies to various types of fractals.

The restriction theory of submanifolds of $\R^d$ was initiated by Stein in 1967, see~\cite[p.~374]{Ste93}.
Typical objects of study include hypersurfaces such as the sphere, the paraboloid and the cone, as well as lower\-/dimensional submanifolds and curves.
Despite significant progress, Stein's restriction conjecture~\cite{Ste79} remains unresolved in most cases.
The methods employed in this subject reach significantly beyond the Tomas--Stein type arguments alluded to above.
We refer the reader to the recent survey~\cite{Sto19} and the references therein.

In order to demonstrate the wide applicability of the Mockenhaupt--Mitsis--Bak--Seeger theorem to fractals we mention a few interrelated classes of deterministic and random fractal sets that have a positive Fourier dimension, with selected references:
\begin{itemize}
\item images of stochastic processes~\cite{Kah85} and random diffeomorphisms~\cite{Eks16},
\item certain sets arising from Diophantine approximation~\cite{Kau81,JS16},
\item limit sets of certain nonlinear group actions~\cite{BD17} and
\item various constructions based on Cantor sets~\cite{Sal51,LW18}.
\end{itemize}
For further information on these matters, we refer the reader to the survey \cite{Lab14}.

If the restriction inequality \cref{e:res} holds, then the Fourier transform on $\mathcal S(\R^d)$ extends to a bounded \emph{restriction operator} $\mathcal R_\mu \colon L^p(\R^d) \to L^q(\mu)$.
In the case of a singular measure $\mu$, this operator is often regarded as a natural way to assign values $\mu$\=/almost everywhere to the Fourier transform of an $L^p(\R^d)$ function.
Indeed, \cref{e:res} readily implies that for any $f \in L^p(\R)$ there exists a sequence of radii $r_n \to 0$ such that
\begin{equation*}
\lim_{n \to \infty} \fint_{\{|y| < r_n\}} \hat f(x-y) \, dy = \mathcal R_\mu f(x)
\qquad
\text{$\mu$-a.e.}
\end{equation*}
where we use the average integral notation
\[
\fint_A h(y) \, dy
=
\frac1{|A|}
\int_A h(y) \, dy
\]
and $|A|$ is the Lebesgue measure.

Recently Müller, Ricci and Wright~\cite{MRW19} proved a \emph{maximal} restriction theorem for planar curves and used it to strengthen the mode of convergence as follows:
\begin{equation}
\label{e:lebR}
\lim_{r \to 0} \fint_{\{|y| < r\}} |\hat f(x-y) - \mathcal R_\mu f(x)| \, dy = 0
\qquad
\text{$\mu$-a.e.}
\end{equation}
when $\mu$ is the affine arc length measure on a smooth planar curve and $f \in L^p(\R^2)$, $1 \leq p < 8/7$.
This means that $\mu$\=/almost every $x \in \R^2$ is a Lebesgue point of $\hat f$ with regularized value $\mathcal R_\mu f(x)$.
Later, Vitturi~\cite{Vit17} obtained a similar result for the surface measure of the sphere $\mathbb S^{d-1} \subseteq \R^d$ and Kova\v{c} and Oliveira e Silva~\cite{KO18} proved a stronger \emph{variational} restriction theorem for spheres.
Then, Kova\v{c}~\cite{Kov19} proved an abstract variational restriction principle that implies the following result.
\begin{theorem}[see {\cite[Remark~3]{Kov19}}]
\label{t:kovac}
If the restriction inequality \cref{e:res} holds for some Borel measure $\mu$ on $\R^d$ and some exponents $p \in [1,2]$ and $q \in (p, \infty)$, then \cref{e:lebR} holds for any $f \in L^{2p/(p+1)}(\R^d)$ and the same measure $\mu$.
\end{theorem}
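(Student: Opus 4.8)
The plan is to deduce \cref{t:kovac} from Kova\v{c}'s maximal (and variational) restriction principle \cite{Kov19}, which self-improves a single instance of the linear restriction inequality \cref{e:res} into a maximal bound for local averages of the Fourier transform, at the cost of lowering the Lebesgue exponent of the input to $r_0 := 2p/(p+1)$. For $x \in \R^d$, $r > 0$ and a function $g$ write
\[
\widetilde A_r g(x) = \fint_{\{|y|<r\}} \bigl|\hat g(x-y) - \mathcal R_\mu g(x)\bigr|\,dy ;
\]
here $\hat g$ is a genuine function whenever $g \in L^{r_0}(\R^d)$, since $r_0 \le 2$. The key input is the a priori estimate
\[
\Bigl\| \sup_{r > 0} \widetilde A_r f \Bigr\|_{L^q(\mu)} \le C\,\|f\|_{L^{r_0}(\R^d)}, \qquad f \in \mathcal S(\R^d),
\]
with $C$ independent of $f$, together with the accompanying variational bound for the $\rho$-variation of $r \mapsto \fint_{\{|y|<r\}} \hat f(x-y)\,dy$ in $L^q(\mu)$, valid for some finite $\rho$; both are the content of \cite[Remark~3]{Kov19}. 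Heuristically, the exponent drop arises because one partitions the scale parameter into dyadic blocks, dominates the oscillation of the local averages within and across these blocks by an $\ell^2$-valued square function built from frequency-localised pieces of $f$, applies \cref{e:res} to each piece, and recombines; the passage through $\ell^2$ — legitimate thanks to the hypotheses $p \le 2$ and $p < q < \infty$ — is what replaces $\|f\|_{L^p}$ by $\|f\|_{L^{r_0}}$, in accordance with $r_0^{-1} = \tfrac12(1 + p^{-1})$. Carrying out this decomposition with all constants independent of $f$ is the substantive step, and I would invoke it from \cite{Kov19} rather than reproduce it.

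Granting the displayed estimate, \cref{e:lebR} follows by the standard approximation argument underlying the Lebesgue differentiation theorem. Since $1 \le r_0 \le p \le 2$, interpolating \cref{e:res} with the trivial inequality $\|\hat f\|_{L^\infty(\R^d)} \le \|f\|_{L^1(\R^d)}$ shows that $\mathcal R_\mu$ extends by continuity to a bounded operator on $L^{r_0}(\R^d)$; and since $g \mapsto \sup_{r>0}\widetilde A_r g$ is sublinear, a routine density-and-Fatou argument shows that the estimate above persists for every $f \in L^{r_0}(\R^d)$. When $f \in \mathcal S(\R^d)$ the function $\hat f$ is continuous and equals $\mathcal R_\mu f$, so every point of $\R^d$ is a Lebesgue point of $\hat f$ with value $\mathcal R_\mu f(x)$ and \cref{e:lebR} holds trivially. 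For general $f \in L^{r_0}(\R^d)$, choose $f_n \in \mathcal S(\R^d)$ with $f_n \to f$ in $L^{r_0}(\R^d)$; from $\hat f = \hat f_n + \widehat{f-f_n}$ and $\mathcal R_\mu f = \hat f_n + \mathcal R_\mu(f - f_n)$ one obtains the pointwise bound
\[
\widetilde A_r f(x) \le \fint_{\{|y|<r\}} \bigl|\hat f_n(x-y) - \hat f_n(x)\bigr|\,dy + \widetilde A_r(f - f_n)(x),
\]
whose first summand tends to $0$ as $r \to 0$ by continuity of $\hat f_n$. Hence $\limsup_{r \to 0}\widetilde A_r f(x) \le \sup_{r > 0}\widetilde A_r(f - f_n)(x)$ pointwise, so by Chebyshev's inequality and the extended a priori estimate the $\mu$-measure of the set $\{x : \limsup_{r \to 0}\widetilde A_r f(x) > \varepsilon\}$ is at most $C^q \varepsilon^{-q}\|f - f_n\|_{L^{r_0}(\R^d)}^q$ for every $\varepsilon > 0$; since this set does not depend on $n$, letting $n \to \infty$ and then $\varepsilon \to 0$ forces $\widetilde A_r f(x) \to 0$ for $\mu$-a.e.\ $x$, which is exactly \cref{e:lebR}. (The variational bound ensures, more precisely, that the unregularised averages $\fint_{\{|y|<r\}}\hat f(x-y)\,dy$ themselves converge as $r \to 0$ at $\mu$-a.e.\ point, but this refinement is not needed for \cref{e:lebR}.)

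The only real obstacle is the a priori estimate itself — converting one instance of \cref{e:res} into uniform control of $\sup_{r>0}\widetilde A_r f$ — and it is there, and only there, that the hypotheses $p \le 2$ and $p < q < \infty$ are used and that the exponent $2p/(p+1)$ is forced; this is the heart of \cite{Kov19}. Everything downstream is the familiar density-plus-Chebyshev scheme, each step of which is licensed by the $L^q(\mu)$ bounds that the estimate supplies.
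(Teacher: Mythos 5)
The paper gives no proof of \cref{t:kovac}; it cites it verbatim from \cite[Remark~3]{Kov19}, treating Kova\v{c}'s maximal/variational restriction principle as an external input. Your proposal is consistent with this: you correctly identify the hard part --- the a priori maximal estimate with exponent $2p/(p+1)$ for Schwartz inputs --- as the content of \cite{Kov19} and invoke it as a black box, then carry out the standard density-plus-Chebyshev scheme to obtain $\mu$-a.e.\ convergence. The downstream steps are stated correctly, and the ``routine'' Fatou step in the density extension does go through: since $2p/(p+1)\le 2$, Hausdorff--Young gives $L^{(2p/(p+1))'}$-convergence of $\widehat{g_m}\to\widehat g$, hence Lebesgue-a.e.\ pointwise convergence along a subsequence; this transfers under translation and makes Fatou in the $y$-variable available for every $x$, while a further subsequence handles the $\mu$-a.e.\ convergence of $\mathcal R_\mu g_m(x)$. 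One minor imprecision in your heuristic paragraph: you attribute both the constraint $q>p$ and the exponent loss from $L^p$ to $L^{2p/(p+1)}$ to the same $\ell^2$-square-function step, but as the paper's own discussion following the theorem points out, these come from different places --- the Christ--Kiselev lemma forces $q>p$, while a separate reflection argument (turning an oscillatory maximal bound into a strong one) is what costs the Lebesgue exponent. This does not affect the validity of your deduction from the cited estimate.
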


We note that Kova\v{c}'s theorem allows for more singular averaging kernels in place of the ball averages in \cref{e:lebR}, see also~\cite{Ram19b}, but we will not use it in that generality.

The proof of \cref{t:kovac} relies on the Christ--Kiselev lemma and this is why $q>p$ is assumed.
It is unknown whether this assumption can be removed.
Similarly, it is unknown whether the conclusion can in general be strengthened by replacing $L^{2p/(p+1)}(\R^d)$ with $L^p(\R^d)$.
Here, the seeming inefficiency is due to a reflection argument that is needed in order to obtain a positive (``strong'') maximal inequality from an oscillatory one.
In certain lower dimensional cases, Ramos \cite{Ram19a} used a linearization method to circumvent this issue.

\subsection{Restriction theorems and Hausdorff dimension}

Given a subset $E$ of $\R^d$, we denote by $\rexp(E)$ the supremum of the range of exponents $p \in [1,2]$ for which there exists a Borel measure $\mu$ with $\mu(E) > 0$ such that \cref{e:res} holds for some exponent $q \in [1,\infty]$.
The universal $L^1(\R^d) \to L^\infty(\mu)$ bound implies that $\rexp(E) \geq 1$.
If $E$ has positive Lebesgue measure, then by the Plancherel theorem we have $\rexp(E) = 2$.

An energy integral argument (see e.g.~\cite[Section~2]{Moc00}) shows that $\rexp(E)$ cannot be too large depending on the Hausdorff dimension $\dimH(E)$ and the ambient dimension $d$. Namely, it holds that
\begin{equation*}
\rexp(E) \leq \frac{2d}{2d-\dimH(E)}.
\end{equation*}
We will show that this is the only valid relation between these quantities:
\begin{corollary}
\label{c:full}
Let $\alpha \in [0,d]$ and $p \in [1, 2d/(2d-\alpha)]$.
There exists a compact set $E \subseteq \R^d$ of Hausdorff dimension $\alpha$ and such that $\rexp(E)=p$.
\end{corollary}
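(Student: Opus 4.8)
The plan is to assemble $E$ from two kinds of pieces: fractal sets supplied by {\L}aba and Wang's near\-/optimal restriction theorem~\cite{LW18}, which will carry the desired restriction exponent, and a subset of the non\-/Lebesgue set from \cref{c:endpt}, which will carry the desired Hausdorff dimension; the gluing has to be done so that $\rexp$ is pinned exactly. Two stability facts drive this. First, $\rexp$ is monotone and countably stable: if $F \subseteq G$ and a Borel measure $\mu$ with $\mu(F)>0$ satisfies \cref{e:res} for some $(p,q)$, then so does $\mu$ on $G$; and if $\mu(\bigcup_n E_n)>0$ and \cref{e:res} holds for $\mu$, then (by countable subadditivity of $\mu$) some $E_n$ has positive mass and the measure $A \mapsto \mu(A \cap E_n)$ satisfies the same estimate, so that the exponents achievable on $\bigcup_n E_n$ are exactly those achievable on some $E_n$. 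Hence $\rexp(\bigcup_n E_n) = \sup_n \rexp(E_n)$. Since $\dimH$ is countably stable too, I may build $E$ from countably many pieces and read off both invariants as suprema; placing rescaled, translated copies of the pieces in pairwise disjoint balls shrinking to a single accumulation point keeps $E$ compact and changes neither invariant.

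Put $\gamma := 2d(p-1)/p$, so that $2d/(2d-\gamma) = p$, and observe that the hypothesis $p \leq 2d/(2d-\alpha)$ is exactly the assertion $\gamma \leq \alpha$. Assume first $p>1$, i.e.\ $\gamma \in (0,\alpha]$, and choose $\gamma_n \uparrow \gamma$ with $\gamma_n < \gamma$. Near\-/optimality of the {\L}aba--Wang theorem yields, for each $n$, a compact set $S_n$ of Hausdorff dimension $\gamma_n$ carrying a Borel measure for which \cref{e:res} holds with some $q$ and some exponent exceeding $2d/(2d-\gamma_n) - 1/n$, so $\rexp(S_n) \geq 2d/(2d-\gamma_n) - 1/n$; on the other hand the energy inequality recalled before \cref{c:full} gives $\rexp(S_n) \leq 2d/(2d-\gamma_n) < p$. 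Let $E'$ be the clustered, compactified union of the $S_n$. Then $\dimH(E') = \sup_n \gamma_n = \gamma$, and $\rexp(E') = \sup_n \rexp(S_n) = p$, the last equality because $2d/(2d-\gamma_n) - 1/n \to p$ from below while every $\rexp(S_n)$ stays strictly below $p$.

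To raise the dimension to $\alpha$ without disturbing $\rexp$, let $E_0 \subseteq \R^d$ be the compact set of \cref{c:endpt}, so $\dimH E_0 = d$ and $\rexp(E_0)=1$. If $\alpha < d$ then $\mathcal H^\alpha(E_0) = \infty$, and by the classical fact that an analytic (here, compact) set of infinite $\mathcal H^\alpha$-measure contains a compact subset of positive finite $\mathcal H^\alpha$-measure, there is a compact $D_\alpha \subseteq E_0$ with $\dimH D_\alpha = \alpha$; by monotonicity $\rexp(D_\alpha) \leq \rexp(E_0) = 1$, hence $\rexp(D_\alpha)=1$. For $\alpha = d$ take $D_d := E_0$, and for $\alpha = 0$ take $D_0 := \{0\}$ (a single point has $\rexp = 1$). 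Now set $E := E' \cup D_\alpha$ (and $E := D_\alpha$ in the excluded case $p=1$). Then $E$ is compact, $\dimH(E) = \max(\gamma, \alpha) = \alpha$, and $\rexp(E) = \max(p, 1) = p$, as required.

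The heart of the argument — and its only genuine obstacle — is obtaining $\rexp(E) = p$ rather than merely $\rexp(E) \leq p$: this works precisely because the {\L}aba--Wang theorem nearly saturates the energy inequality, so that dimensions creeping up to $\gamma$ force the supremum of the pieces' restriction exponents onto $p$ from below while the energy bound caps it from above, and the large non\-/Lebesgue set of \cref{c:endpt} then supplies the extra dimension harmlessly. Beyond this one need only check that the {\L}aba--Wang construction applies at every intermediate dimension $\gamma_n \in (0,d)$ with restriction exponent tending to $2d/(2d-\gamma_n)$, and dispatch the boundary cases $p=1$, $\gamma = \alpha$ and $\alpha \in \{0,d\}$ directly (for instance $\alpha = d$, $p = 2$ is already covered by $E = [0,1]^d$ via the Plancherel fact quoted in the introduction), all of which are routine.
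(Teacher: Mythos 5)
Your proof is correct, and the skeleton of your argument is the same as the paper's: take the union of a piece carved out of the non\-/Lebesgue set $E_0$ of \cref{c:endpt} (which supplies the Hausdorff dimension $\alpha$ while contributing only $\rexp = 1$) with a {\L}aba--Wang piece (which supplies the target restriction exponent $p$), reading off both invariants as maxima over the union, and pass to a dimension\-/$\alpha$ subset of $E_0$ via the Besicovitch--Davies theorem. Where you genuinely diverge is in how the $\rexp$\-/carrying piece is built. The paper invokes \cite[Theorem~2]{LW18} as a black box giving, for each $\alpha_0 \in (0,d)$, a \emph{single} compact set $E_2$ of Hausdorff dimension $\alpha_0$ with $\rexp(E_2) = 2d/(2d-\alpha_0)$ \emph{exactly}, and then takes $\alpha_0 = 2d(p-1)/p$ (your $\gamma$). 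You instead hedge against the possibility that the {\L}aba--Wang construction stops strictly short of the energy bound at each fixed dimension, and assemble a clustered compact union $E'$ of sets $S_n$ of dimensions $\gamma_n \uparrow \gamma$, recovering $\rexp(E') = p$ from the countable stability $\rexp(\bigcup_n F_n) = \sup_n \rexp(F_n)$, which you correctly justify via restriction of measures. This costs some bookkeeping (compactification at an accumulation point, which as you note carries no mass under any $p>1$ restriction measure; invariance of $\rexp$ under similarities), but buys robustness: your argument needs only that the energy bound is \emph{approached} at every intermediate dimension, not attained, so it would survive a weaker form of the fractal restriction theorem. Once one knows the precise statement of \cite[Theorem~2]{LW18}, your limiting construction collapses to the paper's two\-/set union $E_1 \cup E_2$, and your explicit verification of $\rexp(\bigcup_n F_n) = \sup_n \rexp(F_n)$ is the general form of the identity $\rexp(E_1 \cup E_2) = \max(\rexp(E_1),\rexp(E_2))$ that the paper uses without comment.
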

The endpoint $p=1$ follows from \cref{c:endpt} and the endpoint $p=2d/(2d-\alpha)$ is the near\-/optimal fractal restriction theorem of {\L}aba and Wang~\cite{LW18}.
This will be enough to prove \cref{c:full}.

We note that the supremum $\rexp(E)$ itself may or may not satisfy \cref{e:res} for suitable $\mu$ and $q$.
The theorem in \cite{LW18} and hence \cref{c:full} do not address this question.

\subsection{Guide to the paper}

\cref{s:osc,s:sinc,s:proof} are dedicated to the proof of \cref{t}.
\cref{s:proofcset} contains the relatively straightforward derivations of \cref{c:endpt,c:full}.

Our proof of \cref{t} relies on a delicate construction based on a Cantor set.
For the reader's convenience, we would like to discuss some features of this construction in an informal way in this subsection.

In \cref{s:osc}, we introduce a family of Cantor sets parameterized by their dissection ratios $\theta_j \in (0,1/2)$, $j \geq 0$.
We establish conditions under which such a Cantor set $E$ is the non\-/Lebesgue set of a certain natural function $g$ and we calculate the inverse Fourier transform $\check g$.
The proof of \cref{t} then comes down to choosing the dissection ratios in such a way that $E$ has full Hausdorff dimension while $\check g$ is $p$\=/integrable for any $p>1$.
The key terms in the $p$\=/integral of $\check g$ are products of cosines resembling
\begin{equation}
\label{e:prodcos}
\prod_{j=0}^{k-1} \cos(\tfrac12\theta_0 \theta_1 \cdots \theta_{j-1} \xi).
\end{equation}

In \cref{s:sinc}, we bound $p$\=/integrals over bounded intervals of slight perturbations of products of cosines with dyadic phases of the form
\begin{equation}
\label{e:prodcosdygap}
\prod_{j \in J} \cos(2^{-j} \xi)
\end{equation}
where $J$ is a set of positive integers.
Our estimate involves some loss depending on the number of components of $J$, but is otherwise near\-/optimal.

The dissection ratios $\theta_j$ that we fix in \cref{s:proof} to complete the proof of \cref{t} have the following essential properties:
\begin{itemize}
\item \emph{The dissection ratios are very close to $1/2$ in an average sense.}
This ensures that the Cantor set has full Hausdorff dimension and it is a prerequisite for an approximation of the products \cref{e:prodcos} by products of cosines with dyadic phases \cref{e:prodcosdygap}.
\item \emph{Infinitely many consecutive pairs of dissection ratios are bounded away from $0$ and $1/2$.}
Under this condition, the Cantor set is the non\-/Lebesgue set of the associated function $g$.
However, the boundedness away from $1/2$ would potentially destroy the similarity between the products \cref{e:prodcos} and \cref{e:prodcosdygap}.
Therefore, we are led to the following condition.
\item \emph{The dissection rates that are not close to $1/2$ are powers of $1/2$.}
Then, in the analysis of \cref{e:prodcos}, these small dissection ratios conveniently translate into gaps in \cref{e:prodcosdygap}, i.e.\ the set $J$ has multiple components.
We choose powers of $1/2$ to exponents that are \emph{large on average} to ameliorate the aforementioned loss depending on the number of components of $J$.
\end{itemize}

\subsection{Further remarks}

\subsubsection*{Connection to the Erd\H{o}s--Kahane theorem}
It is known that for typical values of $\theta$ close to $1/2$, we have $\prod_{j=0}^\infty \cos(\theta^j \xi) = O(|\xi|^{-\epsilon})$ as $|\xi| \to \infty$, with an explicit but rather small $\epsilon > 0$.
This was proved by Erd\H{o}s~\cite{Erd40} and Kahane~\cite{Kah71}, see also the exposition \cite[Section~6]{PSS00}.
Based on this, one could try to prove a weaker version of \cref{t} by using a Cantor set of \emph{constant} dissection ratio $\theta$ and adapting our proof strategy.
This would involve proving estimates for $p$\=/integrals over bounded intervals of the truncated products $\prod_{j=0}^{k-1} \cos(\theta^j \xi)$, $k \geq 0$.
In the case $\theta=1/2$, we easily achieve this by periodicity considerations, see \cref{s:sinc}.
However, due to arithmetic complications that arise when $1/\theta$ is not an integer, this approach does not work for $\theta \in (1/3,1/2)$.
The author believes that an interesting connection to the number\-/theoretic Erd\H{o}s--Kahane theorem above could arise if this obstacle were tackled.

\subsubsection*{Classes of non\-/Lebesgue sets}
The non\-/Lebesgue sets of $L^q(\R^d)$ functions, ${q \in [1, \infty]}$, were characterized by D'yachkov \cite{Dya93} as the $G_{\delta \sigma}$ sets of zero Lebesgue measure.
This class does not depend on $q$.
In contrast, little seems to be known about the smaller class of non\-/Lebesgue sets of Fourier transforms of $L^p(\R^d)$ functions, $p \in (1,2)$, beyond the example of such a set that is provided by \cref{t} and the $p$\-/dependent necessary conditions derived from maximal restriction theorems.

\subsubsection*{Comparison to a theorem of Körner}
\cref{c:full} should be compared to the well\-/known result that for any $\alpha \in [0,d]$ and any $\beta \in [0,\alpha]$ there exists a set of Hausdorff dimension $\alpha$ and Fourier dimension $\beta$.
Körner~\cite{Kor11} proved a stronger version of this statement where the set is further guaranteed to be precisely the support of a measure of Fourier dimension $\beta$.
One may ask whether a similar strengthening of \cref{c:full} is possible.
We note that our synthetic approach to \cref{c:full}, taking the union of two sets with different properties, is unsuitable for this problem.

\subsubsection*{Restriction theorems and Fourier dimension}
Körner's result above is perhaps unsurprising given that Hausdorff dimension is a metric property of a set while Fourier dimension is an arithmetic one.
Similarly, \cref{c:full} is perhaps unsurprising if one accepts that restriction estimates rely on a lack of arithmetic structure of the underlying measure, see e.g.~\cite{Lab14}.
Indeed, the Mockenhaupt--Mitsis--Bak--Seeger theorem shows that nontrivial restriction estimates hold for a measure or a set if its Fourier dimension is positive.
However, a converse of this theorem is to the author's knowledge not available in general.
Hence, we do not know whether Körner's result can be used to prove at least a special case of \cref{c:full} and we do not know whether further relations besides the Mockenhaupt--Mitsis--Bak--Seeger theorem hold between the Fourier dimension and the range of restriction exponents.

\subsection*{Notation} Throughout the paper, the notation $A \lesssim B$ means that $A \leq CB$ holds for a finite positive constant $C$ that is independent of all parameters.

\subsection*{Acknowledgments}

The author would like to express his gratitude to his doctoral supervisors, Diogo Oliveira e Silva and Jonathan Bennett, for their kind support.
He would like to thank Sebastiano Nicolussi Golo for an inspiring discussion at an early stage of this project, as well as Gianmarco Brocchi for helpful discussions.

\section{Cantor sets as non-Lebesgue sets}
\label{s:osc}

Let $\theta_j \in (0,1/2)$, $j \geq 0$, and let $S$ be a set of nonnegative integers.
We write
\[
\Theta_k
=
\theta_0 \theta_1 \cdots \theta_{k-1}.
\]
Using a Cantor set with \emph{dissection ratios} $\theta_j$ we will prove the following result which serves as the starting point for the proof of \cref{t}.

\begin{proposition}
\label{p:criterion}
Let $\theta_j$, $\Theta_k$ and $S$ be as above.
Assume that
\begin{equation}
\label{e:gmean}
\lim_{k \to \infty} \Theta_k^{1/k} = \tfrac12
\end{equation}
and that there exists an $\epsilon > 0$ such that
\begin{equation}
\label{e:goodpairs}
\theta_j,\theta_{j+1} \in (\epsilon, \tfrac12-\epsilon)
\quad
\text{for infinitely many $j \in S$ with $j+1 \in S$.}
\end{equation}
Then there exists a measurable function $g\colon \R^d \to \{-1,0,1\}$ such that
\begin{enumerate}[label=(\roman*)]
\item \label{i:leb}
the non\-/Lebesgue set of $g$ is compact and has Hausdorff dimension $d$ and
\item \label{i:ft}
for any $p>1$, the inverse Fourier transform $\check g$ lies in $\bigcap_{r \in [p,\infty]} L^r(\R^d)$ if
\begin{align*}
\int_0^\infty
\biggl(\sum_{k \in S}
\frac{2^k \Theta_k}
{1 + (1-2\theta_k) \Theta_k |\xi|}
\prod_{j = 0}^{k-1} |{\cos((1-\theta_j) \Theta_j \pi \xi)}|
\biggr)^p
\, d\xi
< \infty.
\end{align*}
\end{enumerate}
\end{proposition}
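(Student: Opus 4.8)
The plan is to build $g$ explicitly from a Cantor set and to identify $\check g$ as a sum of oscillatory "cap" contributions whose $L^p$ norms are controlled by the stated integral. I would begin by constructing, from the dissection ratios $\theta_j$, the usual nested Cantor set: start with $[0,1]^d$ (or a cube) and at step $j$ replace each surviving cube of side $\Theta_j$ by $2^d$ subcubes of side $\theta_j \Theta_j$ placed in the corners, with gaps of length $(1-2\theta_j)\Theta_j$ in each coordinate. Let $E$ be the intersection. The standard mass-distribution / Frostman computation shows $\dimH E = \liminf_k \frac{\log(2^{dk})}{-\log \Theta_k^{d}} = d \liminf_k \frac{k\log 2}{-\log\Theta_k}$, which equals $d$ precisely under hypothesis \cref{e:gmean}. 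This disposes of the Hausdorff-dimension half of~\ref{i:leb}.

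Next I would define $g$ as a signed sum over the construction. Writing $E_k$ for the union of the $2^{dk}$ cubes of side $\Theta_k$ surviving after $k$ steps, set $g = \sum_{k \in S} c_k (\mathbf 1_{E_{k+1}} - 2^{-d}\mathbf 1_{E_k})$ for suitable normalizing constants $c_k$ chosen so that $g$ takes values in $\{-1,0,1\}$ — in fact the natural choice makes each summand a difference of indicator functions scaled to be $\pm 1$ on the relevant cubes, and because the supports are nested one checks the partial sums telescope to something bounded by $1$ in absolute value. The point of subtracting the average $2^{-d}\mathbf 1_{E_k}$ is that each summand has mean zero on every cube of the previous generation. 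Then at a point $x \in E$, the ball average $\fint_{|y|<r} g(x-y)\,dy$ over $r \sim \Theta_k$ oscillates between values bounded away from each other as $k$ ranges over the indices in $S$ satisfying \cref{e:goodpairs} — here the lower bound $\epsilon$ keeps the oscillation from decaying and the upper bound $\tfrac12-\epsilon$ keeps the cubes from touching — so $x$ is a non-Lebesgue point; conversely at $x \notin E$ the function $g$ is eventually locally constant, so the non-Lebesgue set is exactly $E$, which is compact. This is the part requiring the most care: one must track the geometry precisely enough to show the ball averages genuinely fail to converge for \emph{every} $x\in E$, not merely for a large subset, and this is where condition \cref{e:goodpairs} (both $j$ and $j+1$ in $S$, both ratios in $(\epsilon,\tfrac12-\epsilon)$) is used in an essential way to compare two consecutive generations.

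For~\ref{i:ft} I would compute $\check g$ termwise. The Fourier transform of $\mathbf 1_{E_{k+1}} - 2^{-d}\mathbf 1_{E_k}$ factors across coordinates; in each coordinate one gets a product $\prod_{j=0}^{k-1}$ of Fejér-type factors coming from the placement of subcubes at each generation (a sum of two point masses at the corners of a gap of width $(1-2\theta_j)\Theta_j$, which Fourier-transforms to a cosine $\cos((1-\theta_j)\Theta_j\pi\xi)$ up to a unimodular phase) times a last factor of size $\sim 2^k\Theta_k^d$ localized, in magnitude, to $|\xi| \lesssim (\Theta_k(1-2\theta_k))^{-1}$, which after the elementary bound $\min(a, a/(b|\xi|)) \le a/(1+b|\xi|)$ produces exactly the kernel
\[
\frac{2^k\Theta_k}{1+(1-2\theta_k)\Theta_k|\xi|}\prod_{j=0}^{k-1}|\cos((1-\theta_j)\Theta_j\pi\xi)|
\]
in a single coordinate direction. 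Summing over $k\in S$, applying the triangle inequality in $L^p$, and using that the $d$-dimensional integral factors over coordinates reduces finiteness of $\|\check g\|_{L^p(\R^d)}$ to finiteness of the displayed one-dimensional integral. Finally, since $g\in L^\infty$ with compact support we get $\check g \in L^\infty \cap L^1$ trivially once it is in $L^1$... more precisely $\check g$ is bounded and continuous (as $g\in L^1$) and lies in $L^p$, hence by interpolation in $L^r$ for all $r\in[p,\infty]$, which gives the stated conclusion. The main obstacle throughout is the non-Lebesgue-set identification in~\ref{i:leb}: the Fourier-side estimate in~\ref{i:ft} is a bookkeeping exercise with product formulas, whereas showing the averages diverge at \emph{every} point of a fractal of full dimension requires a genuinely uniform two-scale geometric argument driven by \cref{e:goodpairs}.
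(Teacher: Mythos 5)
Your one‑dimensional intuition is on the right track (indicators of gap intervals, $\sin/ \xi$ envelopes, products of cosines from the midpoint lattice), but the $d$\-/dimensional construction you propose breaks at a crucial step. You take a corner‑type Cantor set in $\R^d$ and set $g = \sum_{k\in S} c_k(\mathbf 1_{E_{k+1}}-2^{-d}\mathbf 1_{E_k})$, then assert that the Fourier transform of each summand ``factors across coordinates.'' It does not. While each of $\mathbf 1_{E_k}$ and $\mathbf 1_{E_{k+1}}$ is a tensor product of the corresponding one‑dimensional indicators, their difference is a \emph{difference of products}, not a product; its Fourier transform is $\prod_i\widehat{\mathbf 1_{E_{k+1}^{(1)}}}(\xi_i) - 2^{-d}\prod_i\widehat{\mathbf 1_{E_k^{(1)}}}(\xi_i)$, and there is no way to rewrite this as a single product over coordinates (nor to reduce its $L^p$ norm to a one‑dimensional integral of the form displayed in~\ref{i:ft}). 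A secondary but related issue is that the summands $\mathbf 1_{E_{k+1}}-2^{-d}\mathbf 1_{E_k}$ do not have disjoint supports and do not take values in $\{-1,0,1\}$ for any choice of $c_k$; the partial sums do not telescope to something bounded, so the ``$\{-1,0,1\}$\-/valued'' claim does not follow from the stated formula.

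The paper resolves exactly this difficulty by choosing a genuinely multiplicative $g$. One builds the one‑dimensional function $g^{(1)} = \sum_{k\in S}(-1)^k\sum_{B_k\in\mathcal B_k}\chi_{B_k}$, whose terms are disjointly supported so the sum is honestly $\{-1,0,1\}$\-/valued, and then sets $g(x_1,\dots,x_d)=\prod_{i=1}^d g^{(1)}(x_i)$. This makes $\check g(\xi)=\prod_i\check g^{(1)}(\xi_i)$ literally, so $\|\check g\|_{L^p(\R^d)}^{p}=\bigl(\int|\check g^{(1)}|^p\bigr)^{d}$ and everything reduces cleanly to dimension one, producing exactly the displayed integral. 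The trade‑off is that the non\-/Lebesgue set is then not a $d$\-/dimensional corner Cantor set but the union of slabs $\{x : x_i\in E^{(1)}\text{ for some }i\}$, which still has full Hausdorff dimension. Verifying that this slab set is the whole non\-/Lebesgue set requires showing that at any point with only \emph{one} bad coordinate the averages still fail to converge; this is why the paper uses the oscillating $\pm1$ coefficients (so that the averages of $g^{(1)}$ in the good coordinates are bounded away from zero, cf.\ the remark at the end of Section~\ref{s:osc}) and compares averages over two consecutive generations whose ratio of scales is controlled by~\cref{e:goodpairs}. Your two‑scale idea for part~\ref{i:leb} is qualitatively correct, but it must be carried out for box averages built from the tensor structure rather than for ball averages over a corner Cantor set.
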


We simultaneously construct the function $g\colon \R^d \to \{-1,0,1\}$ and the set $E \subseteq \R^d$ that we will show to be the non\-/Lebesgue set of $g$.

Let $|A|$ denote the Lebesgue measure of a Borel set $A \subseteq \R$ and let $m(I)$ denote the midpoint of a bounded nonempty interval $I \subseteq \R$.
We define families $\mathcal W_k$ and $\mathcal B_k$ of \emph{white} and \emph{black intervals} of generations $k=0,1,\ldots$ by the recursion
\begin{align*}
\mathcal W_0 &= \bigl\{\bigl[-\tfrac12,\tfrac12\bigr]\bigr\}, \\
\mathcal B_k &= \{ B_k \text{ open interval} \mid \text{${m(B_k)} = m(W_k)$ \,and\, $|B_k| = (1-2\theta_k) |W_k|$} \\
&\phantom{{}= \{ B_k \text{ open interval} \mid{}}
\text{for some $W_k \in \mathcal W_k$} \}, \\
\mathcal W_{k+1} &= \text{set of connected components of $\bigcup \mathcal W_k \setminus \bigcup \mathcal B_k$.}
\end{align*}
At each generation, one black interval is removed from the middle of each remaining white interval, leaving two white intervals of the next generation.
The remaining white set $\bigcup \mathcal W_k$ is decreasing in $k$.
Its limit as $k \to \infty$ is a Cantor set $E^{(1)} \subseteq \R$ with dissection ratios $\theta_k$:
\begin{equation*}
E^{(1)} = \bigcap_{k=0}^\infty \bigcup \mathcal W_k = [-\tfrac12,\tfrac12] \setminus \bigcup_{k=0}^\infty \bigcup \mathcal B_k.
\end{equation*}
We define an oscillating function $g^{(1)}\colon \R \to \{-1,0,1\}$ associated to the black intervals by
\begin{equation}
\label{e:g1}
g^{(1)} = \sum_{k \in S} (-1)^k \sum_{B_k \in \mathcal B_k} \chi_{B_k}.
\end{equation}
Here $\chi_{B_k} \colon \R \to \{0,1\}$ is the characteristic function of $B_k$.
The series \cref{e:g1} converges pointwise and in $L^1(\R)$ and takes values in $\{-1,0,1\}$ since the black intervals $B_k$ are pairwise disjoint.

From $E^{(1)}$ and $g^{(1)}$ we construct the corresponding $d$\=/dimensional objects $E \subseteq \R^d$ and $g\colon \R^d \to \{-1,0,1\}$ by taking a tensor product:
\begin{gather*}
E
=
\bigl\{(x_1,\ldots,x_d) \in [-\tfrac12,\tfrac12]^d
\mid x_i \in E^{(1)} \text{ for some $i$}
\bigr\},
\\
g(x_1,\ldots,x_d)
=
g^{(1)}(x_1) \cdots g^{(1)}(x_d).
\end{gather*}
We next use standard techniques to show that $E$ has full Hausdorff dimension precisely when \cref{e:gmean} holds.
The reader is advised to skip the proof of this result on a first reading.
\begin{lemma}
The Hausdorff dimension of $E$ is $d$ if and only if $\Theta_k^{1/k} \to 1/2$ as $k \to \infty$.
\end{lemma}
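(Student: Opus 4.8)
The plan is to reduce the $d$-dimensional statement to a one-dimensional computation about the Cantor set $E^{(1)}$. Since $E$ is the set of points in $[-\frac12,\frac12]^d$ having at least one coordinate in $E^{(1)}$, we can write $E = \bigcup_{i=1}^d \pi_i^{-1}(E^{(1)}) \cap [-\frac12,\frac12]^d$, a finite union of ``cylinder'' sets. Finite stability of Hausdorff dimension gives $\dimH E = \max_i \dimH(\pi_i^{-1}(E^{(1)}) \cap [-\frac12,\frac12]^d)$, and each such cylinder has Hausdorff dimension $(d-1) + \dimH E^{(1)}$ by the standard product formula $\dimH(A \times B) \ge \dimH A + \dimH B$ together with the trivial upper bound $\dimH(A \times B) \le \dim_B A + \dimH B$ (or simply: a set of the form $E^{(1)} \times [-\frac12,\frac12]^{d-1}$ is bi-Lipschitz-equivalent to a product whose dimension is computed by Marstrand's theorem, since $[-\frac12,\frac12]^{d-1}$ is Ahlfors regular). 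Thus $\dimH E = d$ if and only if $\dimH E^{(1)} = 1$, and the problem is to show that the latter holds precisely when $\Theta_k^{1/k} \to \frac12$.

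For the one-dimensional claim I would invoke the classical dimension theory of homogeneous Cantor sets (as in Falconer's book). At generation $k$ the white set $\bigcup \mathcal W_k$ consists of $2^k$ intervals, each of length $|W_k| = \Theta_k$ (an immediate induction from the recursion, since $|W_{k+1}| = \theta_k |W_k|$ and $|W_0| = 1$). The natural Cantor measure $\nu$ placing mass $2^{-k}$ on each generation-$k$ interval is the tool: by the mass distribution principle, $\dimH E^{(1)} \ge s$ whenever $\nu(I) \lesssim |I|^s$ for all intervals $I$, and conversely the generation-$k$ cover gives $\dimH E^{(1)} \le \liminf_k \frac{\log 2^k}{-\log \Theta_k} = \liminf_k \frac{k \log 2}{-\log \Theta_k}$. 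So the upper bound already forces: if $\dimH E^{(1)} = 1$ then $\liminf_k \frac{k \log 2}{-\log \Theta_k} \ge 1$, i.e. $\limsup_k \Theta_k^{1/k} \le \frac12$; combined with the trivial $\Theta_k^{1/k} < \frac12$ this does \emph{not} immediately give the limit, so one must be slightly careful — but since $\theta_j < \frac12$ always, $\Theta_k^{1/k} \le \frac12$ for every $k$, hence $\limsup = \frac12$ already; the real content is forcing the $\liminf$ up to $\frac12$.

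Hence the substantive direction is: \emph{if} $\Theta_k^{1/k} \to \frac12$ \emph{then} $\dimH E^{(1)} = 1$. Here I would estimate $\nu(I)$ for an arbitrary interval $I$ with $|I| \le 1$: pick the generation $k$ with $\Theta_{k+1} \le |I| < \Theta_k$. Since the generation-$k$ intervals have length $\Theta_k > |I|$ and are separated by gaps of length $(1-2\theta_k)\Theta_k$, the interval $I$ meets at most $O(1 + |I|/\Theta_{k+1})$ generation-$(k+1)$ intervals — but for a clean bound it is easier to note $I$ meets at most two generation-$k$ intervals, so $\nu(I) \le 2 \cdot 2^{-k}$. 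Then $\nu(I) \le 2^{1-k}$ and $|I|^s \ge \Theta_{k+1}^s$, so it suffices that $2^{1-k} \lesssim \Theta_{k+1}^s$, i.e. $s \le \frac{(k-1)\log 2}{-\log \Theta_{k+1}} + o(1)$. Given $\Theta_{k+1}^{1/(k+1)} \to \frac12$, the right-hand side tends to $1$, so for every $s < 1$ the bound $\nu(I) \lesssim_s |I|^s$ holds for all sufficiently small $I$ (and hence, after adjusting the constant, for all $I$), giving $\dimH E^{(1)} \ge s$ for all $s<1$, hence $\dimH E^{(1)} = 1$. The conditions in \cref{e:goodpairs} play no role here. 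The main obstacle is purely bookkeeping: matching the ``$|I|$ between consecutive generation scales'' discretization with the $\nu$-mass bound uniformly, and handling the degenerate cases where $\Theta_k$ decays so fast that many generations are squeezed into one scale — but since $\theta_j < \frac12$ forces $\Theta_{k+1}/\Theta_k < \frac12$, consecutive scales are comparable up to a factor $2$ and no such degeneracy occurs, so the estimate goes through cleanly.
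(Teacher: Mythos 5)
Your argument is correct in substance and rests on the same two pillars as the paper's proof: a covering argument at generation scales for the upper bound on Hausdorff dimension, and the natural Cantor measure (giving mass $2^{-k}$ to each generation-$k$ interval) as a Frostman measure for the lower bound. What differs is the packaging of the $d$-dimensional bookkeeping. You reduce to the one-dimensional statement $\dimH E^{(1)} = 1$ via finite stability of Hausdorff dimension together with the product formula $\dimH\bigl(E^{(1)} \times [-\tfrac12,\tfrac12]^{d-1}\bigr) = \dimH E^{(1)} + (d-1)$ (valid since the cube has equal Hausdorff and box dimension); the paper instead works directly in $\R^d$, defining the measure $\mu(W_k \times T) = 2^{-k}|T|$ on a cylinder over $E^{(1)}$ and verifying the mass bound $\mu(U) \lesssim \diam(U)^{d-\epsilon}$ by hand. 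The 1D reduction is shorter at the cost of citing a Marstrand-type product theorem; the paper's direct argument is more self-contained. Neither buys a sharper conclusion.

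Two small slips, both inessential. First, in the forward direction you write that $\liminf_k \frac{k\log 2}{-\log\Theta_k} \geq 1$ gives ``$\limsup_k \Theta_k^{1/k} \leq \tfrac12$''; the correct consequence is $\liminf_k \Theta_k^{1/k} \geq \tfrac12$ (reciprocals exchange $\liminf$ and $\limsup$), which is exactly what you then say is the real content, so this is just a typo. Second, your closing remark that ``$\theta_j < \tfrac12$ forces $\Theta_{k+1}/\Theta_k < \tfrac12$, hence consecutive scales are comparable up to a factor $2$'' is false: $\theta_k$ may be arbitrarily small (indeed the eventual choice in Section~4 takes $\theta_j = 2^{-w(j)}$ with $w(j) \to \infty$ along a subsequence), so $\Theta_{k+1}$ can be much smaller than $\Theta_k$. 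Fortunately your mass-distribution estimate does not need comparability of consecutive scales: the chain $\nu(I) \leq 2 \cdot 2^{-k} \lesssim_s \Theta_{k+1}^s \leq |I|^s$ uses only $|I| < \Theta_k$ (to get the two-interval bound) and $|I| \geq \Theta_{k+1}$ (to compare with $|I|^s$), together with $\Theta_{k+1}^{1/(k+1)} \to \tfrac12$. So the argument goes through as written once the spurious remark is dropped.
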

\begin{proof}
From the above recursive construction one can verify that the lengths of any $W_k \in \mathcal W_k$ and $B_k \in \mathcal B_k$ are
\begin{equation}
\label{e:len}
|W_k| = \Theta_k
\quad
\text{and}
\quad
|B_k| = (1-2\theta_k)\Theta_k.
\end{equation}
The families $\mathcal W_k$ and $\mathcal B_k$ each contain $2^k$ intervals.

Let $\diam(U) = \sup_{x,y \in U} |x-y|$ be the diameter of a set $U \subseteq \R^d$.
By definition, the Hausdorff dimension of $E$ is $d$ if and only if for every $\epsilon > 0$ there exists an $M = M(\epsilon) > 0$ such that for any open covering $E \subseteq \bigcup_{\gamma=0}^\infty U_\gamma$ we have
\begin{equation}
\label{e:hineq}
\sum_{\gamma=0}^\infty
\diam(U_\gamma)^{d-\epsilon}
\geq
M(\epsilon).
\end{equation}

First assume that $E$ has Hausdorff dimension $d$.
Since $E$ is covered by $d$ isometric copies of $\bigcup_{W_k \in \mathcal W_k} W_k \times [-1/2,1/2]^{d-1}$ and $\diam(W_k) = \Theta_k$, we can cover $E$ by $2^{k} \Theta_k^{-d+1} d$ boxes of diameter comparable to $\Theta_k$ and hence
\[
0<M(\epsilon)
\lesssim
2^k \Theta_k^{-d+1} d \cdot \Theta_k^{d-\epsilon}
=
2^k \Theta_k^{1-\epsilon} d
\]
for any $\epsilon > 0$.
Taking $k$th roots and letting $\epsilon \to 0$, this implies
\[
\limsup_{k \to \infty} \Theta_k^{1/k} \geq \tfrac12.
\]
Since $\Theta_k = \theta_0 \cdots \theta_{k-1} < 2^{-k}$, we in fact have $\Theta_k^{1/k} \to 1/2$ as $k \to \infty$.

Now we assume that $\Theta_k^{1/k} \to 1/2$ as $k \to \infty$.
Let $\mu$ be the Borel probability measure supported in $E$ given by
\[
\mu(W_k \times T) = 2^{-k} |T|
\]
for any $W_k \in \mathcal W_k$, $k \geq 0$, and any Borel set $T \subseteq [-1/2,1/2]^{d-1}$.

Fix an $\epsilon > 0$ and choose a large integer $N=N(\epsilon)$ such that
\[
\Theta_k^{1/k} \geq 2^{-1/(1-\epsilon)} \quad \text{for any $k \geq N$.}
\]
Let $U \subseteq \R^d$ be a Borel set of diameter $\diam(U) < \Theta_N$.
Since $\Theta_k \to 0$ as $k \to \infty$, there is a $k \geq N$ such that
\[
\Theta_{k+1} \leq \operatorname{diam}(U) < \Theta_k.
\]
Then $U$ intersects $W_k \times [-1/2,1/2]^{d-1}$ for at most two intervals $W_k \in \mathcal W_k$.
Hence,
\[
\frac{\mu(U)}{(2\diam(U))^{d-1}}
\leq
2^{-k+1}
\leq
4 \Theta_{k+1}^{1-\epsilon}
\leq
4 \operatorname{diam}(U)^{1-\epsilon}.
\]
Now if $E \subseteq \bigcup_{\gamma=0}^\infty U_\gamma$ is an open covering with $\diam(U_\gamma) < \Theta_N$, then
\[
\sum_{\gamma=0}^\infty
\diam(U_\gamma)^{d-\epsilon}
\geq
2^{-d-1}
\sum_{\gamma=0}^\infty
\mu(U_\gamma)
\geq
2^{-d-1}
\mu(E)
=
2^{-d-1}.
\]
This shows \cref{e:hineq} with $M(\epsilon) = \min(2^{-d-1}, \Theta_{N(\epsilon)}^{d-\epsilon})$ for any covering.
Hence $E$ has Hausdorff dimension $d$.
\end{proof}

Note that $\check g$ is bounded since $g$ is integrable.
Hence, we can prove \cref{i:ft} of \cref{p:criterion} by showing the following result.
\begin{lemma}
Let $p>1$.
It holds that
\[
\|\check g\|_{L^p(\R^d)}^{p/d}
\lesssim
\int_0^\infty
\biggl(\sum_{k \in S}
\frac{2^k \Theta_k}
{1 + (1-2\theta_k) \Theta_k |\xi|}
\prod_{j = 0}^{k-1} |{\cos((1-\theta_j) \Theta_j \pi \xi)}|
\biggr)^p
\, d\xi.
\]
\end{lemma}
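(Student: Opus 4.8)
The plan is to factor everything through the one\-/dimensional building block $g^{(1)}$, compute $\check{g^{(1)}}$ in closed form, and then estimate it pointwise by the sum in the statement.

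\textbf{Reduction to one dimension.} Since $g(x)=g^{(1)}(x_1)\cdots g^{(1)}(x_d)$, the inverse Fourier transform factors as $\check g(\xi)=\prod_{i=1}^{d}\check{g^{(1)}}(\xi_i)$, and Fubini's theorem gives $\|\check g\|_{L^p(\R^d)}^{p}=\|\check{g^{(1)}}\|_{L^p(\R)}^{pd}$, that is, $\|\check g\|_{L^p(\R^d)}^{p/d}=\|\check{g^{(1)}}\|_{L^p(\R)}^{p}$. So it suffices to bound the one\-/dimensional quantity.

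\textbf{Explicit form of $\check{g^{(1)}}$.} A short induction on the construction shows that every interval in $\mathcal B_k$ has length $\ell_k:=(1-2\theta_k)\Theta_k$ and that the $2^k$ midpoints of the intervals in $\mathcal B_k$ (which coincide with those of $\mathcal W_k$) are precisely the numbers $c_\sigma=\sum_{j=0}^{k-1}\sigma_j\,\tfrac12(1-\theta_j)\Theta_j$ over $\sigma\in\{-1,1\}^k$; indeed each $W_k\in\mathcal W_k$ has length $\Theta_k$ and splits into two white intervals of length $\Theta_{k+1}$ centred at $m(W_k)\pm\tfrac12(1-\theta_k)\Theta_k$. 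Hence $\sum_{B_k\in\mathcal B_k}\chi_{B_k}$ is a sum of $2^k$ translates of $\chi_{[-\ell_k/2,\,\ell_k/2]}$, so that, summing the resulting product of two\-/term exponential sums,
\[
\Bigl(\sum_{B_k\in\mathcal B_k}\chi_{B_k}\Bigr)^{\!\vee}(\xi)
=\frac{\sin(\pi\ell_k\xi)}{\pi\xi}\sum_{\sigma\in\{-1,1\}^k}e^{2\pi i c_\sigma\xi}
=\frac{2^k\sin(\pi\ell_k\xi)}{\pi\xi}\prod_{j=0}^{k-1}\cos\bigl((1-\theta_j)\Theta_j\pi\xi\bigr).
\]
Since the series defining $g^{(1)}$ converges in $L^1(\R)$, it converges uniformly after applying the inverse Fourier transform, so $\check{g^{(1)}}(\xi)$ equals $\sum_{k\in S}(-1)^k$ times the right\-/hand side above.

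\textbf{Pointwise estimate and conclusion.} Using $|\sin(\pi\ell_k\xi)/(\pi\xi)|\le\min(\ell_k,1/(\pi|\xi|))\lesssim\ell_k/(1+\ell_k|\xi|)$ together with $\ell_k=(1-2\theta_k)\Theta_k\le\Theta_k$, the $k$\-/th summand of $\check{g^{(1)}}$ is bounded in absolute value by $\tfrac{2^k\Theta_k}{1+(1-2\theta_k)\Theta_k|\xi|}\prod_{j=0}^{k-1}|\cos((1-\theta_j)\Theta_j\pi\xi)|$. Summing, $|\check{g^{(1)}}(\xi)|\lesssim h(\xi)$, where $h(\xi)$ denotes the sum appearing in the statement. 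Finally $g^{(1)}$ is even, because $\mathcal B_k$ is symmetric about the origin for each $k$; thus $\check{g^{(1)}}$ is even and $\|\check{g^{(1)}}\|_{L^p(\R)}^p=2\int_0^\infty|\check{g^{(1)}}(\xi)|^p\,d\xi\lesssim\int_0^\infty h(\xi)^p\,d\xi$, which is the claim. I expect no real obstacle here; the only point requiring care is the bookkeeping in the explicit formula for $\check{g^{(1)}}$ — identifying the midpoints $c_\sigma$, placing the factors of $\pi$ correctly, and justifying the interchange of the sum over $k$ with the inverse Fourier transform via the $L^1$\-/convergence already noted.
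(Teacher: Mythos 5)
Your proposal is correct and follows essentially the same route as the paper: reduce to one dimension via the tensor\-/product structure and Fubini, compute $\check{g^{(1)}}$ exactly from the $L^1$\-/convergent series using the explicit midpoints of the black intervals and the two\-/term exponential identity to produce the cosine product, bound the $\operatorname{sinc}$ factor by $\Theta_k/(1+(1-2\theta_k)\Theta_k|\xi|)$, and finally use symmetry to restrict the integral to $[0,\infty)$. The only cosmetic difference is that the paper invokes $|\check{g^{(1)}}(-\xi)|=|\check{g^{(1)}}(\xi)|$ (from $g^{(1)}$ being real) rather than evenness of $g^{(1)}$; both are valid.
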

\begin{proof}
By Fubini's theorem we have
$\check g(\xi_1, \ldots, \xi_d)
=
\check g^{(1)}(\xi_1)
\cdots
\check g^{(1)}(\xi_d)
$
and therefore
\begin{equation}
\label{e:fttensor}
\|\check g\|_{L^p(\R^d)}^{p/d}
=
\int_{-\infty}^\infty |\check g^{(1)}(\xi)|^p \, d\xi.
\end{equation}
Since the series \cref{e:g1} converges in $L^1(\R)$, we have
\begin{align}
\label{e:ftg1}
\check g^{(1)}(\xi) 
&=
\sum_{k \in S}
(-1)^k
\frac{\sin(|B_k| \pi \xi)}
{\pi \xi}
\sum_{B_k \in \mathcal B_k} e^{2\pi i m(B_k) \xi}
\end{align}
and the outer sum converges uniformly.
The midpoints of the black (and the white) intervals are given by
\begin{equation*}
\{ m(B_k) \mid B_k \in \mathcal B_k \} = \biggl\{ \sum_{j=0}^{k-1} \sigma_j \frac{1-\theta_j}2 \Theta_j \biggm\vert \sigma_j \in \{-1,1\} \biggr\}.
\end{equation*}
We use this to rewrite the inner sum in \cref{e:ftg1} and then we apply the identity $e^{-i\alpha}+e^{i\alpha} = 2\cos(\alpha)$ to get
\begin{align*}
\sum_{B_k \in \mathcal B_k} e^{2\pi i m(B_k) \xi}
&= \prod_{j=0}^{k-1}
\bigl(
e^{-i (1-\theta_j) \Theta_j \pi \xi}
+
e^{i (1-\theta_j) \Theta_j \pi \xi}
\bigr)
=
2^k \prod_{j = 0}^{k-1}
\cos((1-\theta_j) \Theta_j \pi \xi).
\end{align*}
Combining this with \cref{e:ftg1}, \cref{e:len} and the elementary estimate $|{\sin(\eta)/\eta}| \lesssim (1+|\eta|)^{-1}$ gives the pointwise bound
\begin{align*}
|\check g^{(1)}(-\xi)|
=
|\check g^{(1)}(\xi)|
&\lesssim
\sum_{k \in S}
\frac{2^k \Theta_k}
{1+ (1-2\theta_k)\Theta_k |\xi|}
\prod_{j = 0}^{k-1}
|{\cos((1-\theta_j) \Theta_j \pi \xi)}|.
\end{align*}
In view of \cref{e:fttensor}, this completes the proof of the lemma.
\end{proof}

We now complete the proof of \cref{i:leb} in \cref{p:criterion} by showing the following result.

\begin{lemma}
\label{l:leb}
If there exists an $\epsilon > 0$ such that \cref{e:goodpairs} holds, then $E$ is the non\-/Lebesgue set of $g$.
\end{lemma}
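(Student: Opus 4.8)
The plan is to verify the two inclusions separately. The inclusion of the non-Lebesgue set of $g$ into $E$ is easy: the black intervals of all generations being pairwise disjoint, on each $B_k$ the function $g^{(1)}$ is identically $(-1)^k$ (if $k\in S$) or identically $0$, and $g^{(1)}$ vanishes off $[-\tfrac12,\tfrac12]$, so $g^{(1)}$ is locally constant on the open set $\R\setminus E^{(1)}$. Hence if $x=(x_1,\dots,x_d)\notin E$, every coordinate $x_i$ lies in $\R\setminus E^{(1)}$, the tensor product $g$ is constant on a neighbourhood of $x$, and any point at which a function is locally constant is a Lebesgue point of it.

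For the reverse inclusion we must show that every $x\in E$ fails to be a Lebesgue point of $g$. Let $I=\{i:x_i\in E^{(1)}\}$, which is nonempty, and for $i\notin I$ let $v_i\in\{-1,1\}$ be the value of $g^{(1)}$ on a neighbourhood of $x_i$; set $v=\prod_{i\notin I}v_i\neq0$. The idea is to produce balls shrinking to $x$ on which the average of $|g-c|$ stays bounded away from $0$ uniformly in $c\in\mathbb C$, which rules out $x$ being a Lebesgue point. By \cref{e:goodpairs} there are infinitely many \emph{good} generations $k$, meaning $k,k+1\in S$ and $\theta_k,\theta_{k+1}\in(\epsilon,\tfrac12-\epsilon)$; the decisive geometric fact is that for such a $k$ and any $W\in\mathcal W_k$, the function $g^{(1)}$ equals $(-1)^k$ on the black interval $B_k\subseteq W$, of measure $(1-2\theta_k)|W|\ge2\epsilon|W|$, and equals $(-1)^{k+1}$ on the two generation-$(k+1)$ black intervals contained in $W$, of total measure $2(1-2\theta_{k+1})\theta_k|W|\ge4\epsilon^2|W|$.

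Fix a good generation $k$ large enough that $(x_i-\Theta_k,x_i+\Theta_k)$ is contained in the neighbourhood of $x_i$ where $g^{(1)}\equiv v_i$, for each $i\notin I$, and for $i\in I$ let $W_k^{(i)}\in\mathcal W_k$ be the generation-$k$ white interval containing $x_i$. The box
\[
Q_k=\prod_{i\in I}W_k^{(i)}\times\prod_{i\notin I}\bigl(x_i-\Theta_k,\,x_i+\Theta_k\bigr)
\]
contains $x$, has $\diam Q_k\lesssim\Theta_k$ and $|Q_k|\gtrsim\Theta_k^d$, and $g(y)=v\prod_{i\in I}g^{(1)}(y_i)$ for $y\in Q_k$. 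Fix an index $i_0\in I$; requiring $y_i$ to lie in the generation-$k$ black interval of $W_k^{(i)}$ for every $i\in I$ (respectively, for every $i\in I\setminus\{i_0\}$ while $y_{i_0}$ lies in one of the two generation-$(k+1)$ black intervals of $W_k^{(i_0)}$) carves out a subset $A_k^{+}$ (respectively $A_k^{-}$) of $Q_k$ of measure at least $\epsilon^{d+1}|Q_k|$ on which $\prod_{i\in I}g^{(1)}(y_i)$ equals $\eta$ (respectively $-\eta$), where $\eta=(-1)^{k|I|}$. Therefore, for every $c\in\mathbb C$,
\[
\fint_{Q_k}|g(y)-c|\,dy\ \ge\ \frac{|A_k^{+}|\,|v\eta-c|+|A_k^{-}|\,|v\eta+c|}{|Q_k|}\ \ge\ \epsilon^{d+1}\bigl(|v\eta-c|+|v\eta+c|\bigr)\ \ge\ 2\epsilon^{d+1}|v|.
\]
Because $x\in Q_k$ and $Q_k$ is contained in a ball about $x$ of radius $\lesssim\Theta_k$ whose volume is $\lesssim|Q_k|$, the average of $|g-c|$ over that ball is also $\gtrsim\epsilon^{d+1}|v|$; letting its radius tend to $0$ along good generations, we conclude that $x$ is not a Lebesgue point of $g$.

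The step I expect to be the main obstacle is the bookkeeping in the third paragraph: producing, inside each shrinking box $Q_k$, two subsets of comparable measure on which the product $\prod_{i\in I}g^{(1)}(y_i)$ takes opposite values, with all constants uniform in the good generation $k$ and in the location of $x$ in $E$, and checking that the coordinates outside $I$ merely contribute the harmless nonzero constant $v$. The one-dimensional oscillation estimate and the comparison of the off-centre boxes $Q_k$ with the centred balls of the Lebesgue-point definition are then routine.
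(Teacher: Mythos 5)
Your proof takes a genuinely different route from the paper's. The paper first establishes \cref{l:nocauchy}, showing that $\fint_{W_k} g^{(1)}$ depends only on $k$ and that consecutive averages (for $k, k+1 \in S$) differ by at least $2(1-2\theta_k)(1-2\theta_{k+1})$; it then picks shrinking boxes about $x$ built from white intervals at alternating generations $k(a)$, $k'(a)=k(a)\pm1$ and shows the averages of $g$ over these boxes fail to converge, which rules out a Lebesgue point by bounded eccentricity. You instead carve two disjoint sub\-/boxes $A_k^{\pm}$ out of each box $Q_k$ — supported on the generation\-/$k$ black intervals versus the two generation\-/$(k+1)$ black intervals in one distinguished coordinate — of measure $\gtrsim \epsilon^{d+1}|Q_k|$ each and with opposite nonzero $g$\-/values $\pm v\eta$, then bound $\fint_{Q_k}|g-c|\ge\epsilon^{d+1}(|v\eta-c|+|v\eta+c|)\ge 2\epsilon^{d+1}|v|$ uniformly in $c$. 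This is cleaner and more self\-/contained: you never need \cref{l:nocauchy}, and you get a direct lower bound on the mean oscillation rather than inferring it from a non\-/Cauchy sequence of averages. The underlying geometric observation (the $B_k$ and the two $B_{k+1}$'s inside a fixed $W_k$ carry opposite signs and comparable measure when $k,k+1$ is a good pair) is the same seed as the paper's Lemma 7, but you exploit it pointwise in the definition of the $A_k^\pm$ rather than through the average $\fint_{W_k}g^{(1)}$.

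One caveat you should be aware of, although it is shared verbatim with the paper's own argument: you assert $v_i\in\{-1,1\}$ for $i\notin I$, which amounts to $g^{(1)}\equiv\pm1$ in a neighbourhood of any $x_i\notin E^{(1)}$ with $x_i\in[-\tfrac12,\tfrac12]$. But $g^{(1)}$ as defined in \cref{e:g1} vanishes on every black interval $B_k$ with $k\notin S$, and in the application (Section 4) the set $S$ is sparse, so such coordinates occur; for them $v_i=0$, hence $v=0$ and the lower bound degenerates. The paper's proof makes the same implicit assumption in the sentence ``let $a\ge0$ be so large that $g^{(1)}$ is identically equal to $1$ or $-1$ in a $\Theta_{k(a)}$\-/neighborhood of any $x_i\notin E^{(1)}$'', so your argument is at the paper's level of rigor. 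Strictly speaking, though, when $d\ge2$ and $S\neq\{0,1,2,\ldots\}$ the non\-/Lebesgue set of $g$ is only the subset of $E$ on which each coordinate $x_i\notin E^{(1)}$ lies in a black interval of $S$\-/generation (a point $x\in E$ with some $x_i$ in a zero\-/valued black interval has $g\equiv0$ near $x$ and \emph{is} a Lebesgue point); this subset is still compact of full Hausdorff dimension, so \cref{p:criterion}~\ref{i:leb} survives, but the statement of \cref{l:leb} as ``$E$ is the non\-/Lebesgue set'' and your easy inclusion paragraph require the same adjustment.
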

Note that \cref{e:goodpairs} implies $\liminf_{k \to \infty} \theta_k < 1/2$ and hence by \cref{e:len}:
\[
|E|
\leq
d \cdot \lim_{k \to \infty} \sum_{W_k \in \mathcal W_k} |W_k| \cdot |[-\tfrac12,\tfrac12]|^{d-1}
=
d \cdot \lim_{k \to \infty} 2^k \Theta_k
=
0.
\]
By the Lebesgue differentiation theorem, this is a necessary condition for $E$ to be the non\-/Lebesgue set of a locally integrable function.

The key step in the proof of \cref{l:leb} is the following lower bound on the oscillation of averages of the one\-/dimensional function $g^{(1)}$:
\begin{lemma}
\label{l:nocauchy}
For any $k \geq 0$ and any $W_k, W_k' \in \mathcal W_k$ it holds that \[\fint_{W_k} g^{(1)} \, dx = \fint_{W_k'} g^{(1)} \, dx\]
and for any $k,k+1 \in S$ and any $W_k \in \mathcal W_k$ and $W_{k+1} \in \mathcal W_{k+1}$ it holds that
\begin{equation*}
\biggl| \fint_{W_k} g^{(1)} \, dx - \fint_{W_{k+1}} g^{(1)} \, dx \biggr| \geq 2 (1-2\theta_k)(1-2\theta_{k+1}).
\end{equation*}
\end{lemma}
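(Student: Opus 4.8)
The plan is to study the single quantity $a_k := \fint_{W_k} g^{(1)}\,dx$: first show it is independent of the choice of $W_k\in\mathcal W_k$ (this is the first assertion of the lemma), then derive a two-term linear recursion for it, and finally read off the claimed lower bound on $|a_k-a_{k+1}|$.

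\emph{Step 1: the average is the same on every generation-$k$ white interval.} Any two intervals $W_k,W_k'\in\mathcal W_k$ have the common length $\Theta_k$ by \cref{e:len}, so one is the image of the other under an affine isometry $\phi$ of $\R$ (a translation, possibly composed with a reflection about a point). Since each step of the construction deletes from a white interval its \emph{central} subinterval — of length a prescribed multiple of the parent's length depending only on the dissection ratios — the black intervals inside $W_k$ are carried exactly onto those inside $W_k'$ by $\phi$, generation by generation, so $g^{(1)}\circ\phi=g^{(1)}$ on $W_k$ and the two averages agree. To make this precise one may truncate \cref{e:g1} to its terms with index $\le N$ (legitimate because the series converges in $L^1(\R)$, so the truncated averages tend to $\fint_{W_k}g^{(1)}\,dx$) and then induct downward on $k$, the base case $k=N$ being immediate since a generation-$N$ white interval contains no black interval of generation $\le N$ other than possibly $B_N$. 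Write $a_k$ for the common value; since $g^{(1)}$ takes values in $\{-1,0,1\}$ we have $|a_k|\le1$.

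\emph{Step 2: a recursion, and an identity for $a_k-a_{k+1}$.} Fix $k\in S$ and $W_k\in\mathcal W_k$. Up to a finite set, $W_k$ is the disjoint union of its central black interval $B_k\in\mathcal B_k$ and the two white intervals $W_{k+1},W_{k+1}'\in\mathcal W_{k+1}$ it splits into. Black intervals of generation below $k$ are disjoint from $W_k$, while those of generation above $k$ lying in $W_k$ sit inside $W_{k+1}\cup W_{k+1}'$; hence $g^{(1)}=(-1)^k$ on $B_k$. Using $|B_k|=(1-2\theta_k)\Theta_k$ and $|W_{k+1}|=|W_{k+1}'|=\Theta_{k+1}=\theta_k\Theta_k$ from \cref{e:len} together with Step 1 applied at generation $k+1$,
\[
a_k\Theta_k=\int_{W_k}g^{(1)}\,dx=(-1)^k(1-2\theta_k)\Theta_k+2a_{k+1}\Theta_{k+1},
\]
so dividing by $\Theta_k$ gives $a_k=(-1)^k(1-2\theta_k)+2\theta_k a_{k+1}$ for every $k\in S$. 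Subtracting $a_{k+1}$ and using $2\theta_k-1=-(1-2\theta_k)$,
\[
\Bigl|\,\fint_{W_k}g^{(1)}\,dx-\fint_{W_{k+1}}g^{(1)}\,dx\,\Bigr|=(1-2\theta_k)\,\bigl|1-(-1)^k a_{k+1}\bigr|.
\]

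\emph{Step 3: using $k+1\in S$ to close the estimate.} Because $k+1\in S$, the recursion of Step 2 also applies at $k+1$, giving $a_{k+1}=(-1)^{k+1}(1-2\theta_{k+1})+2\theta_{k+1}a_{k+2}$. Multiplying by $(-1)^k$ and bounding $(-1)^k a_{k+2}\le|a_{k+2}|\le1$,
\[
(-1)^k a_{k+1}=-(1-2\theta_{k+1})+2\theta_{k+1}(-1)^k a_{k+2}\le-(1-2\theta_{k+1})+2\theta_{k+1}=4\theta_{k+1}-1<1,
\]
hence $1-(-1)^k a_{k+1}\ge 2(1-2\theta_{k+1})\ge0$. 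Substituting this into the identity from Step 2,
\[
\Bigl|\,\fint_{W_k}g^{(1)}\,dx-\fint_{W_{k+1}}g^{(1)}\,dx\,\Bigr|=(1-2\theta_k)\bigl(1-(-1)^k a_{k+1}\bigr)\ge 2(1-2\theta_k)(1-2\theta_{k+1}),
\]
which is the claimed bound.

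\emph{Expected main obstacle.} The analysis is elementary; the only genuine subtlety is obtaining the constant $2$ rather than $1$. The bare estimate $|a_{k+1}|\le1$ gives nothing, and were $k+1$ absent from $S$ the recursion at $k+1$ would only yield $(-1)^k a_{k+1}\le 2\theta_{k+1}$ and hence merely $(1-2\theta_k)(1-2\theta_{k+1})$; it is precisely the extra summand $-(1-2\theta_{k+1})$ contributed by the black interval $B_{k+1}$ when $k+1\in S$ that doubles the bound. The remaining point requiring a little care is the generation-by-generation bookkeeping in Step 1, mildly awkward only because the dissection ratios change from one generation to the next.
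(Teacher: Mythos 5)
Your proof is correct and follows essentially the same route as the paper: the same recursion $a_k=(-1)^k(1-2\theta_k)+2\theta_ka_{k+1}$, and the same key step of using $k+1\in S$ to extract the improved bound $(-1)^ka_{k+1}\le 4\theta_{k+1}-1$ (the paper derives this by bounding $|g^{(1)}|\le1$ on $W_{k+1}\setminus B_{k+1}$, you by applying the recursion once more and using $|a_{k+2}|\le1$; these are the same estimate). The only cosmetic difference is in Step~1, where you invoke a self-similarity/affine-isometry argument while the paper counts black intervals directly via~\cref{e:avexp} and~\cref{e:len}.
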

Before proving this lemma, we show how it can be used to prove \cref{l:leb}.
\begin{proof}[Proof of \cref{l:leb}]
The function $g$ is constant in each of the connected components of the open set $\R^d \setminus E$.
Hence, every point of that set is a Lebesgue point.

It remains to show that there are no Lebesgue points in $E$.
By \cref{e:goodpairs,l:nocauchy}, we can find sequences of indices $k(a), k'(a) \in S$, $a \geq 0$, with $k'(a) = k(a)\pm1$ and $k(a) \to \infty$ as $a \to \infty$ such that
\begin{equation}
\label{e:kaprop}
\theta_{k(a)}, \theta_{k'(a)} > \epsilon,
\quad
\biggl|
\fint_{W_{k(a)}} g^{(1)} \, dx - \fint_{W_{k'(a)}} g^{(1)} \, dx
\biggr|
>
8\epsilon^2,
\quad
\biggl|
\fint_{W_{k(a)}} g^{(1)} \, dx
\biggr|
>
4\epsilon^2
\end{equation}
for any $W_{k(a)} \in \mathcal W_{k(a)}$ and $W_{k'(a)} \in \mathcal W_{k'(a)}$.

Fix a point $(x_1,\ldots,x_d) \in E$ and let $a \geq 0$ be so large that $g^{(1)}$ is identically equal to $1$ or identically equal to $-1$ in a $\Theta_{k(a)}$\=/neighborhood of any $x_i \not\in E^{(1)}$, $i=1,\ldots,d$.
For $x_i \not\in E^{(1)}$, let $W_{k(a)}^{i}$ be any interval of length $\Theta_{k(a)}$ containing $x_i$.
For $x_i \in E^{(1)}$, choose intervals $W_{k(a)}^i \in \mathcal W_{k(a)}$ and $W_{k'(a)}^i \in \mathcal W_{k'(a)}$ that contain $x_i$.
In both cases we have, by $g = \pm 1$ and the third inequality in \cref{e:kaprop}, respectively:
\begin{equation}
\label{e:avlarge}
\biggl|
\fint_{W_{k(a)}^i} g^{(1)} \, dx
\biggr|
>
4 \epsilon^2.
\end{equation}

Fix an index $j$ for which $x_j \in E$ and consider the Cartesian products
\[
Q_{2a} = \prod_{i=1}^d W_{k(a)}^i,
\qquad
Q_{2a+1} = \prod_{i=1}^{j-1} W_{k(a)}^i \times W_{k'(a)}^j \times \prod_{i=j+1}^d W_{k(a)}^i.
\]
Making use of the tensor product structure of $g$, we can use the second inequality in \cref{e:kaprop} and \cref{e:avlarge} as follows:
\begin{align*}
\biggl|
\fint_{Q_{2a}} g
-
\fint_{Q_{2a+1}} g
\biggr|
&=
\biggl|
\fint_{W_{k(a)}^j} g^{(1)}
-
\fint_{W_{k'(a)}^j} g^{(1)}
\biggr|
\cdot
\prod_{i \neq j}
{
\biggl|
\fint_{W_{k(a)}^i} g^{(1)}
\biggr|}
\\
&>
2^{2d+1} \epsilon^{2d}
>0
\end{align*}
which holds for any large enough $a$, i.e.\ the averages of $g$ over the boxes $Q_b$ do not converge as $b \to \infty$.
On the other hand, by the first inequality in \cref{e:kaprop} these boxes have bounded eccentricity.
This shows that $(x_1,\ldots,x_d) \in \bigcap_{b \geq 0} Q_b$ is not a Lebesgue point of $g$.
\end{proof}
To complete the proof of \cref{l:leb} and hence of \cref{p:criterion}, we need to perform the calculations leading to \cref{l:nocauchy}.
\begin{proof}[Proof of \cref{l:nocauchy}]
By construction, it holds that $B_j \cap W_k = \emptyset$ when $B_j \in \mathcal B_j$, $W_k \in \mathcal W_k$ and $j < k$.
For $j \geq k$ we either have $B_j \cap W_k = \emptyset$ or $B_j \subseteq W_k$.
Hence,
\begin{align}
\label{e:avexp}
\fint_{W_k} g^{(1)} \, dx
&= \fint_{W_k} \sum_{j \in S;\,j \geq k} (-1)^j \sum_{B_j \in \mathcal B_j;\,B_j \subseteq W_k} \chi_{B_j}
\end{align}
For a fixed $W_k \in \mathcal W_k$, there are precisely $2^{j-k}$ intervals $B_j \in \mathcal B_j$ for which $B_j \subseteq W_k$.
Furthermore for fixed $k$ and $j$, the average $\fint_{W_k} \chi_{B_j} \, dx$ does by \cref{e:len} not depend on the choice of $W_k \in \mathcal W_k$ and $B_j \in \mathcal B_j$ as long as $B_j \subseteq W_k$.
This shows the first claim of the lemma.

It remains to prove the inequality in the second claim.
Let $k, k+1 \in S$ and let $W_k \in \mathcal W_k$ and $W_{k+1} \in \mathcal W_{k+1}$.
As $W_k$ is the disjoint union of two intervals in $\mathcal W_{k+1}$ and one interval in $\mathcal B_k$, we have by \cref{e:len}:
\[
\fint_{W_k} g^{(1)} \, dx
= 2 \theta_k \fint_{W_{k+1}} g^{(1)} \, dx
+ (-1)^k (1-2 \theta_k).
\]
Since $|g^{(1)}(x)| \leq 1$ for any $x \in \R$, it follows from ignoring all but the first term of the outer sum on the right\-/hand side of \cref{e:avexp} that
\[
(-1)^{k+1} \fint_{W_{k+1}} g^{(1)} \, dx
\geq -1 + 2(1-2\theta_{k+1}).
\]
Together with the last equation this implies
\[
(-1)^k
\biggl(
\fint_{W_k} g^{(1)} \, dx
-
\fint_{W_{k+1}} g^{(1)} \, dx
\biggr)
\geq
2(1-2\theta_k)(1-2\theta_{k+1}).
\]
Because the right\-/hand side is positive, this completes the proof of \cref{l:nocauchy}.
\end{proof}

We have now proved \cref{p:criterion}.

\begin{remark}
In the definition \cref{e:g1} of $g^{(1)}$, the oscillating coefficients $\pm1$ may be replaced by $0$ and $1$, respectively, yielding $\{0,1\}$\=/valued functions $g^{(1)}$ and $g$ instead of $\{-1,0,1\}$\=/valued ones.
The thus modified function $g$ still satisfies all properties that are asserted in \cref{p:criterion}.
However, if $d \geq 2$, then the non\-/Lebesgue set of $g$, while still of full Hausdorff dimension, would be a proper subset of $E$ since \cref{e:avlarge} would fail for some $x_i \not\in E^{(1)}$.
\end{remark}

\section{Incomplete cosine expansions of \texorpdfstring{$\sin(x)/x$}{sin(x)/x}}
\label{s:sinc}

In this section, we bound $p$\=/integrals of products of cosines with dyadic phases.
Our motivation is Euler's product expansion of the $\operatorname{sinc}$ function:
\begin{equation}
\label{e:euler}
\prod_{j=1}^\infty \cos(2^{-j} \xi) = \frac{\sin(\xi)}{\xi}.
\end{equation}
A quick proof of this identity can be obtained by iterating the double\-/angle formula $\sin(\xi) = 2 \sin(\xi/2) \cos(\xi/2)$ and using that $2^n \sin(\xi/2^n) \to \xi$ as $n \to \infty$.
Other proofs are possible, see e.g.\ the probabilistic proof in \cite{Kac59}.

The function in \eqref{e:euler} lies in $L^p_\xi(\R)$ for any $p > 1$.
We are interested in the stability of this property under omission of factors from the product of cosines.
First, it follows from \eqref{e:euler} that the product can be truncated after logarithmically in $|\xi|$ many steps without a loss in the decay rate.
More precisely,
\begin{align}
\label{e:eulertrunc}
\Bigl|\frac{\sin(\xi)}{\xi}\Bigr|
\leq
\prod_{j = 1}^n |{\cos(2^{-j} \xi)}|
\leq
\frac\pi2 \cdot \Bigl|\frac{\sin(\xi)}{\xi}\Bigr|
\qquad
\text{if $|\xi| \leq 2^{n-1} \pi$.}
\end{align}
However, if any further factor is omitted from this finite product, then the pointwise upper bound fails dramatically for some $|\xi| \leq 2^{n-1} \pi$.
We will therefore focus on integral estimates.
Given a finite set $J$ of integers, we define its \emph{number of components} $b(J)$ as follows:
\begin{equation*}
b(J) = \# \{j \in J \mid j-1 \not\in J\}.
\end{equation*}

\begin{lemma}
\label{l:eulergap}
Let $n \geq 1$ be an integer and let $J \subseteq \{1,2,\ldots,n\}$.
Then it holds for every $p \in (1,\infty)$ that
\begin{align}
\label{e:eulergap}
\int_0^{2^{n-1} \pi}
\prod_{j \in J}
|{\cos(2^{-j} \xi)}|^p
\, d\xi
&\leq
2^{n-|J|-1} \pi
C_p^{b(J)}
\end{align}
where $C_p$ is a finite constant that depends only on $p$.
\end{lemma}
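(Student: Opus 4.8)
The plan is to induct on the number of components $b(J)$. The base case $b(J) = 1$ means $J = \{\ell, \ell+1, \dots, \ell + r\}$ is a single block of $r+1$ consecutive integers; the inductive step peels off the block of $J$ containing the largest index and reduces $b$ by one. I want the constant $C_p$ to come entirely from handling a single block, so the heart of the matter is a one-block estimate.

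For a single block $J = \{\ell, \dots, \ell+r\}$, first substitute $\eta = 2^{-\ell}\xi$ to rewrite
\[
\int_0^{2^{n-1}\pi} \prod_{j=\ell}^{\ell+r} |\cos(2^{-j}\xi)|^p \, d\xi
= 2^\ell \int_0^{2^{n-\ell-1}\pi} \prod_{i=0}^{r} |\cos(2^{-i}\eta)|^p \, d\eta.
\]
The integrand $\prod_{i=0}^r |\cos(2^{-i}\eta)|^p$ is $2^{r+1}\pi$-periodic (each $\cos(2^{-i}\eta)$ has period $2^{i+1}\pi \mid 2^{r+1}\pi$), so the integral over $[0, 2^{n-\ell-1}\pi]$ is at most $\lceil 2^{n-\ell-1}\pi / (2^{r+1}\pi)\rceil$ times the integral over one period $[0, 2^{r+1}\pi]$. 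Over a full period I use the exact product formula \eqref{e:euler} restricted to the first $r+1$ factors: iterating the double-angle identity gives $\prod_{i=0}^{r}\cos(2^{-i}\eta) = \prod_{i=0}^{r}\cos(2^{-i}\eta)$, and more usefully, on $[0, 2^{r+1}\pi]$ one has $\prod_{i=0}^{r} 2\cos(2^{-i}\eta) $ telescoping through $\sin$. Concretely $\prod_{i=0}^{r}\cos(2^{-i}\eta) = \dfrac{\sin(2\eta)}{2^{r+1}\sin(2^{-r}\eta)}$, so its $p$-th power is integrable over a period with a bound depending only on $p$ and $r$; but I need the bound to be $2^{-r} C_p$ (up to the periodicity counting factor), i.e. genuinely gaining the full factor $2^{-|J|}$. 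This is the step that needs care: I split $[0,2^{r+1}\pi]$ into the region where $|2^{-r}\eta|\lesssim 1$, where $|\sin(2^{-r}\eta)|\gtrsim 2^{-r}|\eta|$ and the bound behaves like the sinc function giving $\int |\sin(2\eta)/\eta|^p\,d\eta \lesssim_p 1$ times $2^{r(p-1)}\cdot 2^{-r(r+1)p}$... so instead I bound $\prod_{i=0}^r|\cos(2^{-i}\eta)|$ on this region crudely by $\prod_{i=1}^{r}|\cos(2^{-i}\eta)| \le \tfrac{\pi}{2}|\sin\eta/\eta|$ using \eqref{e:eulertrunc} after a shift, which already gives the gain $2^{-r}$ from counting periods plus the decay from the sinc. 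A cleaner route: use \eqref{e:eulertrunc} directly, which for a single block of length $r+1$ starting at $1$ gives $\prod_{j=1}^{r}|\cos(2^{-j}\xi)| \le \tfrac{\pi}{2}|\sin\xi/\xi|$ whenever $|\xi|\le 2^{r-1}\pi$, and combine with periodicity for larger $|\xi|$; the single extra factor $|\cos(2^{-0}\eta)|^p \le 1$ is harmless. Tracking constants, a single block contributes $2^{-|J|}$ times $C_p := \tfrac\pi2 \cdot (\tfrac\pi2)^p \cdot \int_{\R}|\sin t/t|^p\,dt + O(1)$, finite for $p > 1$.

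For the inductive step with $b(J) \ge 2$, write $J = J' \sqcup B$ where $B = \{m, m+1, \dots, m+s\}$ is the top block (so every element of $J'$ is $< m-1$, hence $b(J') = b(J) - 1$ and $J' \subseteq \{1,\dots,m-2\}$). Split the range $[0, 2^{n-1}\pi]$ into intervals of length $2^{m-1}\pi$; on each such interval $\prod_{j\in J'}|\cos(2^{-j}\xi)|^p$ is controlled by periodicity (all its periods divide $2^{m-1}\pi$... more precisely $\prod_{j\in J', j\le m-2}$ has period dividing $2^{m-1}\pi$), while $\prod_{j\in B}|\cos(2^{-j}\xi)|^p$ is handled by the single-block estimate on that interval, gaining the factor $2^{-|B|}$ and one copy of $C_p$. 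Multiplying the $J'$-average over $[0,2^{m-1}\pi]$ (to which the inductive hypothesis applies, giving $2^{-|J'|-1}\cdot 2^{m-1}\pi \cdot$ wait, one must normalize) by the $B$-contribution and summing over the $2^{n-m}$ translates yields \eqref{e:eulergap} with $b(J) = b(J')+1$ and $|J| = |J'|+|B|$. The bookkeeping of the $-1$ in the exponent $2^{n-|J|-1}$ comes out because the base case has it and each step multiplies lengths consistently.

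The main obstacle I anticipate is the constant-tracking in the single-block estimate — specifically, verifying that one genuinely recovers the full $2^{-|J|}$ gain with a $p$-dependent (not $r$-dependent) constant. The naive periodicity-plus-one-period bound risks an $r$-dependent loss from the $\sin(2^{-r}\eta)$ in the denominator of the closed form; the fix is to use the truncated Euler bound \eqref{e:eulertrunc} to replace the tail of the block by a clean sinc factor before integrating, so that the only genuine integral is $\int_\R |\sin t/t|^p\,dt < \infty$ for $p>1$, and everything else is counting periods. Secondarily, one must make sure the blocks in the inductive decomposition are separated enough (gap of size $\ge 1$ by definition of $b$) that the periodicity argument for $\prod_{j \in J'}$ is valid over intervals scaled to the top block — this is exactly what $b(J)$ is designed to measure, so it goes through.
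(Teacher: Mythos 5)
Your proposal is correct and follows essentially the same strategy as the paper: induction on the number of components $b(J)$, peeling off the top block $B$, controlling the product over the remaining set $J'$ by periodicity together with the inductive hypothesis, and controlling the top block by combining the truncated Euler bound \eqref{e:eulertrunc} with a convergent Riemann-type sum of $|\sin/\cdot|^p$. The only substantive differences are cosmetic: the paper starts the induction at $b(J)=0$ (so the single-block constant emerges during the inductive step rather than in the base case) and uses a finer two-parameter covering at scale $2^{n_0-1}\pi$ with $n_0 = \max(J_0 \cup \{0\})$, whereas you use a single-scale covering at the coarser scale $2^{\ell-1}\pi$ (with $\ell$ the bottom of the top block), which still works since $|\sin\eta/\eta|^p$ is summable over intervals of length $\pi$ when $p>1$.
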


\begin{proof}
We prove the lemma with the (non\-/optimal) constant
\begin{equation*}
C_p = 2 \pi^p \sum_{s=1}^\infty \frac1{s^p} < \infty.
\end{equation*}
We proceed by induction on the number of components $b(J)$.
If $b(J)=0$, then $J$ is empty and \cref{e:eulergap} is immediate.

Now fix a nonnegative integer $b$ and assume that \cref{e:eulergap} holds whenever $b(J) = b$.
Let $n$ be a positive integer and let $J_1$ be a subset of $\{1, \ldots, n\}$ such that $b(J_1) = b+1$.
We can decompose this set as $J_1 = J_0 \cup \{\ell, \ell+1, \ldots, m\}$ where $b(J_0) = b$ and $\sup(J_0) + 2 \leq \ell \leq m \leq n$.
Write $n_0 = \max(J_0 \cup \{0\})$.

We need to show \cref{e:eulergap} for $J_1$.
To this end, we cover the domain of integration $[0,2^{n-1}\pi]$ by the essentially disjoint intervals of equal length
\begin{equation*}
A(q,r)
=
[(2^{m-1} q + 2^{n_0-1} r) \pi, (2^{m-1} q + 2^{n_0-1} (r + 1)) \pi]
\end{equation*}
for any integers $q$ and $r$ with $0 \leq q < 2^{n-m}$ and $0 \leq r < 2^{m-n_0}$.
If $j \in J_0$, then the function $\xi \mapsto |{\cos(2^{-j} \xi)}|$ is even and $2^{n_0}\pi$\=/periodic.
We use this and the induction hypothesis to obtain
\begin{align*}
\int_{A(q,r)}
\prod_{j \in J_0}
|{\cos(2^{-j} \xi)}|^p
\, d\xi
&=
\int_0^{2^{n_0-1}\pi}
\prod_{j \in J_0}
|{\cos(2^{-j} \xi)}|^p
\, d\xi
\\
&\leq
2^{n_0-|J_0|-1} \pi C_p^{b(J_0)}
\\
&=
2^{n_0-|J|+m-\ell} \pi C_p^{b(J)-1}.
\end{align*}
Similarly for $j \leq m$, the function $\xi \mapsto |{\cos(2^{-j} \xi)}|$ is even and $2^m \pi$\=/periodic.
This gives
\begin{align*}
\sup_{\xi \in A(q,r)} \prod_{j=\ell}^m |{\cos(2^{-j} \xi)}|
&=
\sup_{\xi \in A(0,r)} \prod_{j=\ell}^m |{\cos(2^{-j} \xi)}|
\notag
\\
&=
\sup_{\xi \in A(0,r)} \prod_{j=1}^{m-\ell+1} |{\cos(2^{-j} 2^{1-\ell} \xi)}|
\notag
\\
&\leq
\frac \pi {1 + 2^{n_0-\ell} \pi r}.
\label{e:slow}
\end{align*}
The last inequality follows from \cref{e:eulertrunc} and the definition of $A(0,r)$.
We combine the last two estimates to get an estimate for the product over the full set of indices $J_1$:
\[
\int_{A(q,r)}
\prod_{j \in J_1}
|{\cos(2^{-j} \xi)}|^p
\, d\xi
\leq
\frac
{2^{n_0-|J|+m-\ell} \pi^{p+1} C_p^{b(J)-1}}
{(1 + 2^{n_0-\ell} \pi r)^p}.
\]
Note that the numerator does not depend on $q$ or $r$ and the denominator does not depend on $q$.
Therefore, we can sum over $q$ and $r$ as follows:
\begin{align*}
\int_0^{2^{n-1} \pi}
\prod_{j \in J_1}
|{\cos(2^{-j} \xi)}|^p
\, d\xi
&=
\sum_{q=0}^{2^{n-m}-1}
\sum_{r=0}^{2^{m-n_0}-1}
\int_{A(q,r)}
\prod_{j \in J_1}
|{\cos(2^{-j} \xi)}|^p
\, d\xi
\\
&\leq
2^{n-m}
2^{n_0-|J|+m-\ell} \pi^{p+1} C_p^{b(J)-1}
\sum_{r=0}^\infty
\frac1{(1 + 2^{n_0-\ell} \pi r)^p}
\\
&\leq
2^{n-|J|} \pi^{p+1} C_p^{b(J)-1}
\sum_{s=1}^\infty
\frac1{s^p}.
\end{align*}
For the last inequality, we replaced $\pi r$ by the largest multiple of $2^{\ell-n_0}$ not exceeding~$r$.
By our choice of $C_p$, this shows \cref{e:eulergap} for $J_1$ and hence closes the induction.
\end{proof}

In the proof of \cref{t} we will need the following perturbed version of the previous result.

\begin{lemma}
\label{l:pert}
Let $n \geq 1$ be an integer, let $J \subseteq \{1,2,\ldots,n\}$, let $1 < p \leq p_0 < \infty$ and let $\epsilon > 0$.
There exists a number $\delta = \delta(n, p_0, \epsilon) > 0$ that does not depend on $J$ or $p$ such that if
\begin{equation}
\label{e:phiclose}
\phi_j \in (2^{-j}(1-\delta), 2^{-j}(1+\delta))
\end{equation}
for all $j \in J$, then the following inequality holds:
\begin{equation*}
\int_0^{2^{n-1}\pi}
\prod_{j \in J}
|{\cos(\phi_j \xi)}|^p
\, d\xi
\leq
(1+\epsilon)
2^{n-|J|-1} \pi C_p^{b(J)}.
\end{equation*}
\end{lemma}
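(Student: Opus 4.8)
The plan is to derive \cref{l:pert} from \cref{l:eulergap} by a perturbation estimate on the fixed bounded interval $[0,2^{n-1}\pi]$, with the key requirement that every bound be uniform in both the set $J$ and the exponent $p\in(1,p_0]$. First I would record the elementary pointwise estimate: if $\phi_j\in(2^{-j}(1-\delta),2^{-j}(1+\delta))$, then for $0\le\xi\le2^{n-1}\pi$ the phases differ by $|\phi_j\xi-2^{-j}\xi|\le 2^{-j}\delta\cdot 2^{n-1}\pi\le C_n\delta$ with $C_n$ depending only on $n$, so, cosine being $1$-Lipschitz,
\[
|{\cos(\phi_j\xi)}| \le |{\cos(2^{-j}\xi)}| + C_n\delta
\qquad(j\in J,\ 0\le\xi\le2^{n-1}\pi).
\]

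Next I would pass to $p$-th powers. Taking $\delta$ small enough that $C_n\delta\le1$, both sides above lie in $[0,2]$, and applying the mean value theorem to $t\mapsto t^p$ on $[0,2]$ gives $(a+C_n\delta)^p\le a^p+p_0 2^{p_0-1}C_n\delta$ for every $a\in[0,1]$ and every $1<p\le p_0$ --- it is here that the uniform bound $p\le p_0$ is used. Writing $\delta':=p_0 2^{p_0-1}C_n\delta$, multiplying over $j\in J$ and expanding the product gives
\[
\prod_{j\in J}|{\cos(\phi_j\xi)}|^p
\le
\prod_{j\in J}\bigl(|{\cos(2^{-j}\xi)}|^p+\delta'\bigr)
=
\sum_{K\subseteq J}(\delta')^{|J\setminus K|}\prod_{j\in K}|{\cos(2^{-j}\xi)}|^p.
\]

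Then I would integrate over $[0,2^{n-1}\pi]$ and isolate the term $K=J$. That term is precisely the integral estimated by \cref{l:eulergap}, hence is at most $2^{n-|J|-1}\pi\,C_p^{b(J)}$. For each of the fewer than $2^n$ proper subsets $K\subsetneq J$ I would bound the integrand crudely by $1$ and use $(\delta')^{|J\setminus K|}\le\delta'$ (legitimate since $|J\setminus K|\ge1$ and $\delta'\le1$), so that all of these terms together contribute at most $2^n\cdot\delta'\cdot 2^{n-1}\pi=2^{2n-1}\pi\delta'$. Because $C_p^{b(J)}\ge1$ and $2^{n-|J|-1}\ge\tfrac12$, the $K=J$ term is at least $\pi/2$; thus it suffices to choose $\delta$ --- depending only on $n$, $p_0$, $\epsilon$ through $C_n$, and in particular independent of $J$ and $p$ --- small enough that $2^{2n-1}\pi\delta'\le\tfrac{\epsilon}{2}\pi$, i.e. $\delta'\le\epsilon 2^{-2n}$. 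This yields the asserted bound $(1+\epsilon)2^{n-|J|-1}\pi\,C_p^{b(J)}$.

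I do not foresee a real obstacle. The only subtlety is keeping the double uniformity intact: one must \emph{not} re-invoke \cref{l:eulergap} on the proper-subset terms, since that would reintroduce a factor $C_p^{b(K)}$ which is unbounded as $p\to1^+$ and would spoil the independence of $\delta$ from $p$; the crude bound $|{\cos}|\le1$ is enough there. Likewise the increment of $t\mapsto t^p$ must be controlled by a constant depending only on $p_0$, which is exactly the role of the hypothesis $p\le p_0$.
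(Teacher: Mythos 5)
Your proof is correct, but it takes a genuinely different route from the paper's. The paper argues softly: for a fixed $J$, the integral $I^p(\{\phi_j\}_{j\in J})=\int_0^{2^{n-1}\pi}\prod_{j\in J}|\cos(\phi_j\xi)|^p\,d\xi$ is (by dominated convergence) jointly continuous in $p\in[1,p_0]$ and the $\phi_j$'s, hence uniformly continuous on a compact neighborhood of the dyadic phases; it is also uniformly bounded below away from $0$ on that compact set; combining these facts yields a $\delta(n,J,p_0,\epsilon)$ with $I^p(\{\phi_j\})\le(1+\epsilon)I^p(\{2^{-j}\})$, after which one takes the minimum over the finitely many $J\subseteq\{1,\dots,n\}$ and invokes \cref{l:eulergap}. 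Your argument instead produces the same conclusion constructively: the Lipschitz bound on $\cos$, a mean-value estimate for $t\mapsto t^p$ uniform over $p\le p_0$, and a binomial expansion of $\prod_j(|\cos(2^{-j}\xi)|^p+\delta')$ reduce everything to the single exact term $K=J$ (controlled by \cref{l:eulergap}) plus at most $2^n$ error terms, each crudely bounded by $\delta'\cdot 2^{n-1}\pi$; one then observes that the target $2^{n-|J|-1}\pi C_p^{b(J)}$ is at least $\pi/2$, so making $\delta'\le\epsilon 2^{-2n}$ (and $\delta'\le 1$, $C_n\delta\le 1$) closes the argument. The paper's route is shorter and hides the bookkeeping in compactness; your route gives an explicit admissible $\delta$ and makes the uniformity in $p$ (through $p_0 2^{p_0-1}$) and in $J$ (through the crude subset count) completely transparent. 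Your remark that the error terms must be bounded by $1$ rather than by re-applying \cref{l:eulergap} is precisely the right point to flag, since the latter would reintroduce the $p$-dependent constant $C_p$.
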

\begin{proof}
First, let $J \subseteq \{1,\ldots,n\}$ be fixed.
Consider the integrals
\[
I^p(\{\phi_j\}_{j \in J})
=
\int_0^{2^{n-1}\pi}
\prod_{j \in J}
|{\cos(\phi_j \xi)}|^p
\, d\xi
\in (0, \infty).
\]
By the dominated convergence theorem, $I^p(\{\phi_j\}_{j \in J})$ is continuous in $p \in [1,\infty)$ and $\phi_j \in \R$.
Hence it is uniformly continuous once $p$ and the $\phi_j$ are confined to a compact domain.
We have by compactness that
\[
\int_0^{2^{n-1} \pi}
\prod_{j \in J}
|{\cos(2^{-j} \xi)}|^p
\, d\xi
> 0
\]
uniformly in $p \in [1,p_0]$.
Now this together with uniform continuity allows us to find $\delta=\delta(n,J,p_0,\epsilon) > 0$ such that
\[
\int_0^{2^{n-1}\pi}
\prod_{j \in J}
|{\cos(\phi_j \xi)}|^p
\, d\xi
\leq
(1+\epsilon)
\int_0^{2^{n-1}\pi}
\prod_{j \in J}
|{\cos(2^{-j} \xi)}|^p
\, d\xi
\]
whenever $1 \leq p \leq p_0$ and \cref{e:phiclose} holds.
As there are only finitely many subsets of $\{1,\ldots,n\}$, the number $\delta$ can in fact be chosen independently of $J$.
An application of \cref{l:eulergap} completes the proof.
\end{proof}

\section{Proof of Theorem~\ref{t}}
\label{s:proof}

In this section, we use the criteria of \cref{p:criterion} and the analytical \cref{l:pert} to prove \cref{t} in five steps.

\subsection{Choice of parameters}
We fix a set $S$ of nonnegative integers and a weight function $w: S \to \{2,3,4,\ldots\}$ and define
\[
w^+(k) = \sum_{j \in S;\,j<k} w(j)
\qquad \text{and} \qquad
\chi^+(k) = \#(S \cap [0,k-1])
\]
such that there is a finite constant $M \geq 2$ and there are infinitely many $k \in S$ for which $k+1 \in S$ and $w(k), w(k+1) \leq M$ and such that we have the following asymptotics:
\begin{equation}
\lim_{\substack{k \to \infty\\k \in S}} \frac{w(k)}{w^+(k)} = 0
\quad \text{and} \quad
\lim_{k \to \infty} \frac{\chi^+(k)}{w^+(k)} = 0
\quad \text{and} \quad
\lim_{k \to \infty} \frac{w^+(k)}k = 0.
\label{e:w}
\end{equation}
For example, we may choose $S = \{n^3 \mid n \geq 2\} \cup \{n^6+1 \mid n \geq 2\}$ and $w(n^3)=n$ if $n$ is not a square and $w(n^6)=w(n^6+1)=2$ for any $n \geq 2$.

Next we fix a function $r: \{0,1,2,\ldots\} \to \{0,1,2,\ldots\}$ satisfying
\begin{equation}
\lim_{s \to \infty} r(s) = \infty
\quad \text{and} \quad
\lim_{s \to \infty}
\frac{r(s)}{w^+(s)}
=
0.
\label{e:r}
\end{equation}
Let $\delta(n,p_0,\epsilon)$ be the numbers from \cref{l:pert} and write $\delta(n) = \delta(n,2,1)$.
We may assume that $0 < \delta(n+1) < \delta(n) < 1/2$ for any $n$.
Finally we choose dissection ratios $\theta_j \in (0,1/2)$ as follows:
\begin{equation*}
\theta_j
=
\begin{cases*}
2^{-w(j)} & if $j \in S$, \\
\frac12(1-\alpha_j) & if $j \not\in S$,
\end{cases*}
\end{equation*}
with error terms $\alpha_j$ satisfying $0 < 2\alpha_{j+1} \leq \alpha_j \leq 1/4$ and
\begin{equation}
\label{e:alphadelta}
2 \alpha_j
\leq
\inf\{\delta(2s) \mid s \geq 0, r(s) \leq j\}
\end{equation}
for any $j \geq 0$.
Note that the infimum above is positive since $r(s) \to \infty$ as $s \to \infty$.

\subsection{Asymptotics of products of dissection rates}
It follows that
\begin{equation*}
\prod_{j=k}^\infty
(1-\alpha_j)
\geq
1 - \sum_{j=k}^\infty \alpha_j
\geq
1 - 2\alpha_k.
\end{equation*}
This bound is significant because of the following expansion of $\Theta_k$ based on our choice of dissection ratios:
\begin{align*}
\Theta_k
&=
\prod_{j \in S;\,j < k} 2^{-w(j)}
\cdot
\prod_{j \not\in S;\,j < k} \tfrac12(1-\alpha_j)
\\
&=
2^{-k-w^+(k)+\chi^+(k)}
\cdot
\prod_{j \not\in S;\,j < k} (1-\alpha_j).
\end{align*}
For notational convenience, we define corresponding to any index $i$ a larger index by
\[
i^\ast = i + w^+(i) - \chi^+(i).
\]
Then, we obtain the following inequalities for any $k \geq 0$ and $j \geq r(s)$:
\begin{align}
\label{e:Thetabd}
2^{-k^\ast-1}
\leq{}
&\Theta_k
\leq
2^{-k^\ast},
\\
\label{e:partialThetabd}
(1- \delta(2s))
2^{-j^\ast + r(s)^\ast}
\leq{}
&\Theta_{r(s)}^{-1} \Theta_j
\leq
2^{-j^\ast + r(s)^\ast}.
\end{align}
Hence, $\Theta_k$ is close to a particular power of $1/2$ and this relation is even tighter for the partial product $\Theta_{r(s)}^{-1} \Theta_j$.
Using the last two limits in \cref{e:w}, it follows from \cref{e:Thetabd} that $\Theta_k^{1/k} \to 1/2$ as $k \to \infty$.
This verifies the assumption \cref{e:gmean} of \cref{p:criterion}.
Notice that the assumption \cref{e:goodpairs} is satisfied for any $\epsilon \in (0,2^{-M})$ by our choices of $S$ and $\theta_j$.

\subsection{Integral decomposition}
In order to prove \cref{t}, it remains to verify the inequality in \cref{i:ft} of \cref{p:criterion} for any $p \in (1,2]$.
Since $1-2\theta_k \geq 1/2$ for $k \in S$ we may omit this term from the left\-/hand side of that inequality.
Hence, in order to prove \cref{t} it now suffices to show that
\[
H(\xi)
=
\sum_{k \in S}
\frac{2^k \Theta_k}
{1+\Theta_k |\xi|}
\prod_{j=0}^{k-1}
|{\cos((1-\theta_j)\Theta_j\pi\xi)}|
\]
lies in $L^p_\xi([0,\infty))$ for any $p \in (1,2]$.
Consider the integrals at scales $s \geq k$:
\begin{equation}
I^p_{k,s}
=
\int_0^{\Theta_s^{-1}}
\prod_{j=0}^{k-1} 
|{\cos((1-\theta_j)\Theta_j\pi\xi)}|^p
\, d\xi.
\label{e:Ip}
\end{equation}
We use Minkowski's inequality in $L^p([0,\infty))$ and for every $k \in S$ we decompose $[0,\infty)$ into the subintervals $[0,\Theta_k^{-1})$ and $[\Theta_s^{-1}, \Theta_{s+1}^{-1})$ for $s \geq k$ to obtain the bound
\begin{equation}
\|H\|_{L^p([0,\infty))}
\lesssim
\sum_{k \in S}
2^k \Theta_k
\biggl(
I^p_{k,k}
+
\sum_{s=k}^\infty
\frac{\Theta_s^p}{\Theta_k^p}
I^p_{k,s+1}\biggr)^{1/p}
\eqqcolon
\sum_{k \in S} A_k^p.
\label{e:mink}
\end{equation}

We let go of some factors in the product of cosines in \eqref{e:Ip}, perform a linear change of variables and then slightly enlarge the domain of integration using \cref{e:Thetabd} to obtain
\begin{align}
I^p_{k,s}
&\leq
\pi^{-1}\Theta_{r(s)}^{-1}
\int_0^{\Theta_{r(s)} \Theta_s^{-1}\pi}
\prod_{\substack{j \not\in S\\r(s)\leq j \leq k-1}}
|{\cos((1-\theta_j)\Theta_{r(s)}^{-1}\Theta_j\xi)}|^p
\, d\xi
\notag
\\
&\lesssim
\Theta_{r(s)}^{-1}
\int_0^{2^{s^\ast - r(s)^\ast + 1} \pi}
\prod_{\substack{j \not\in S\\r(s)\leq j \leq k-1}}
|{\cos((1-\theta_j)\Theta_{r(s)}^{-1}\Theta_j\xi)}|^p
\, d\xi.
\label{e:Ipbd}
\end{align}

\subsection{Application of Lemma~\ref{l:pert}}
We next analyze the phases of the cosines in \cref{e:Ipbd}.
If $j \not\in S$ and $r(s) \leq j \leq k-1 < s$, then \cref{e:partialThetabd}, \cref{e:alphadelta} and the inequality $1-\theta_j > 1/2$ imply
\begin{align*}
|2^{-(j^\ast-r(s)^\ast+1)}
-
(1-\theta_j)\Theta_{r(s)}^{-1}\Theta_j|
\leq
2^{-(j^\ast-r(s)^\ast+1)}
\delta(2s).
\end{align*}
If $k$ and hence $s$ are larger than some sufficiently large constant $K = K_{S,\theta_j}$, then we have by \cref{e:w} that $s^\ast \leq 2s-2$ and $j^\ast-r(s)^\ast+1 \leq 2s$.
Therefore, we verified the assumption \cref{e:phiclose} in \cref{l:pert} in the case when $k \geq K$ with the near\-/dyadic phases
\[
\phi_{j^\ast-r(s)^\ast+1}
=
\phi_{j^\ast-r(s)^\ast+1, s}
=
(1-\theta_j)\Theta_{r(s)}^{-1}\Theta_j
\]
and the following set of dyadic exponents:
\[
J = J_{k,s}
=
\{
j^\ast - r(s)^\ast + 1
\mid
j \not\in S \text{ and } r(s) \leq j \leq k-1
\}.
\]
Since the map $j \mapsto j^\ast$ is strictly increasing and therefore injective we have
\[
\#J_{k,s}
\geq
k - \chi^+(k) - r(s)
=
k^\ast - w^+(k) - r(s).
\]
We have $(j+1)^\ast = j^\ast + 1$ if and only if $j \not\in S$.
Hence for any $j \notin S$ with $r(s) < j \leq k-1$ the condition $j^\ast - r(s)^\ast \not\in J_{k,s}$ is equivalent to $j-1 \in S$.
This allows us to bound the number of components of $J_{k,s}$:
\[
b(J_{k,s})
\leq
\#(S \cap [r(s), k-2]) + 1
\leq
\chi^+(k) + 1.
\]
Furthermore, the set $J_{k,s}$ is bounded from above:
\[
\sup(J_{k,s})
\leq
(k-1)^\ast - r(s)^\ast + 1
\leq
s^\ast - r(s)^\ast.
\]
Compare this to the upper bound of integration in \cref{e:Ipbd} to see that \cref{l:pert} is applicable to the integral in \cref{e:Ipbd}.
We obtain that
\begin{align*}
I^p_{k,s}
&\lesssim
\Theta_{r(s)}^{-1}
2^{s^\ast-r(s)^\ast-\# J_{k,s}}
C_p^{b(J_{k,s})},
\quad
\text{if $k \geq K$.}
\end{align*}
We use \cref{e:Thetabd} and the bounds above on $\# J_{k,s}$ and $b(J_{k,s})$ to bring this estimate into a more convenient form:
\begin{align*}
I^p_{k,s}
&\lesssim
\Theta_s^{-1}
2^{-\# J_{k,s}}
C_p^{b(J_{k,s})}
\lesssim
\Theta_s^{-1} \Theta_k 2^{w^+(k)+r(s)} C_p^{\chi^+(k)+1},
\quad
\text{if $k \geq K$.}
\end{align*}

\subsection{Conclusion}
We use the previous inequality and \cref{e:Thetabd} to estimate the terms $A_k^p$, $k \in S$, of the sum in \cref{e:mink}:
\begin{equation*}
A_k^p
\lesssim
2^{-(1-1/p)w^+(k)+\chi^+(k)}
C_p^{(\chi^+(k)+1)/p}
\biggl(
2^{r(k)}
+
\sum_{s=k}^\infty
\frac{\Theta_s^{p-1}}
{\Theta_k^{p-1}}
\theta_s^{-1}
2^{r(s+1)}
\biggr)^{1/p}.
\end{equation*}
Fix a positive number $\epsilon$ such that $2\epsilon < 1-1/p$ and $\epsilon < p-1$.
After possibly increasing $K$, we obtain from the limits \cref{e:w,e:r} that
\[
\theta_s^{-1}
2^{r(s+1)}
\leq
2^{\epsilon w^+(s)}
=
2^{\epsilon w^+(k)}
2^{\epsilon (w^+(s)-w^+(k))}
\leq
2^{\epsilon w^+(k)}
\Theta_s^{-\epsilon}\Theta_k^{\epsilon},
\quad
\text{if $s \geq K$}
\]
and further for $k \in S$ with $k \geq K$:
\begin{align*}
A_k^p
&\lesssim
2^{-(1-1/p-2\epsilon)w^+(k)}
\biggl(
1
+
\sum_{s=k}^\infty
\frac{\Theta_s^{p-1-\epsilon}}
{\Theta_k^{p-1-\epsilon}}
\biggr)^{1/p}
\leq
2^{-(1-1/p-2\epsilon)w^+(k)}
S_{p,\epsilon}
\end{align*}
where $S_{p,\epsilon}$ is a constant depending on $p$ and $\epsilon$.
We split the sum in \cref{e:mink} as follows:
\[
\|H\|_{L^p([0,\infty))}
\lesssim
\sum_{k \in S;\,k < K} A_k^p
+
\sum_{k \in S;\,k \geq K} A_k^p.
\]
The sum over $k \geq K$ is dominated by a convergent geometric series due to the last bound on $A_k^p$ and since $1-1/p-2\epsilon > 0$ and $w^+(k_1) \leq w^+(k_2)-2$ for any $k_1, k_2 \in S$ with $k_1 < k_2$.
The sum over $k < K$ above is finite since it is the sum of finitely many terms $A_k^p$, each of which is finite.
Hence $H$ lies in $L^p([0,\infty))$.
This completes the proof of \cref{t}.

\section{Proofs of Corollaries~\ref{c:endpt} and~\ref{c:full}}
\label{s:proofcset}

We first prove \cref{c:endpt} using \cref{t,t:kovac}.
Based on this, we then we prove \cref{c:full}.

\begin{proof}[Proof of Corollary~\ref{c:endpt}]
By \cref{t} there is a function $f \in \bigcap_{p \in (1,\infty]} L^p(\R^d)$ such that the non\-/Lebesgue set $E$ of $\hat f$ is compact and has full Hausdorff dimension~$d$.
We will show that $E$ satisfies the remaining assertion of \cref{c:endpt}.

To this end, let $\mu$ be a nonzero Borel measure such that \cref{e:res} holds for some exponents $p \in (1,2]$ and $q \in [1,\infty]$.
It suffices to show that $\mu(E)=0$.
A scaling argument shows that since $p > 1$ we necessarily have $q < \infty$.
Therefore, it follows from \cref{e:res} that $\mu$ is $\sigma$\=/finite and an interpolation with the trivial $L^1(\R^d) \to L^\infty(\mu)$ bound gives a $L^{p_1}(\R^d) \to L^{q_1}(\mu)$ restriction estimate with $1 < p_1 < q_1 < \infty$.
Hence, by \cref{t:kovac} and since $f$ lies in $L^{2p_1/(p_1+1)}(\R^d)$, $\mu$\=/a.e.\ point is a Lebesgue point of $\hat f$.
But $\hat f$ has no Lebesgue points in $E$ and therefore $\mu(E)=0$.
This completes the proof.
\end{proof}

\begin{proof}[Proof of Corollary~\ref{c:full}]
If $p=2$, then $\alpha = d$ and any compact set of positive Lebesgue measure proves the corollary.
We may now assume that $p<2$.

Let $E$ be the set from \cref{c:endpt}.
Let $E_1$ be a compact subset of $E$ of Hausdorff dimension equal to $\alpha$.
Such a subset can be explicitly obtained by reducing the Cantor set of \cref{s:osc} in an appropriate way, but its existence is also guaranteed by a theorem of Besicovitch~\cite{Bes52}, see also Davies~\cite{Dav52}.

If $p=1$, then we are finished since $\rexp(E) = 1$ and hence $\rexp(E_1) = 1$.
We may now assume that $1 < p \leq 2d/(2d-\alpha)$.
Hence, there is an $\alpha_0 \in (0,\alpha]$ such that $p = 2d/(2d-\alpha_0)$.
Using the previous reduction to the case $p<2$, we see that $\alpha_0<d$.
By \cite[Theorem~2]{LW18}, there is a compact set $E_2$ of Hausdorff dimension $\alpha_0$ such that $\rexp(E_2) = p$.
Now we have
\[
\dimH(E_1 \cup E_2)
=
\max(\dimH(E_1), \dimH(E_2))
=
\max(\alpha, \alpha_0)
=
\alpha
\]
and similarly
\[
\rexp(E_1 \cup E_2)
=
\max(\rexp(E_1), \rexp(E_2))
=
\max(1,p)
=
p.
\]
Hence, the compact set $E_1 \cup E_2$ has the claimed properties.
\end{proof}

\printbibliography
\end{document}